\newcommand\myurl[1]{\url{#1}}
\newtheorem{thm}{Theorem}
\newtheorem{lem}[thm]{Lemma}
\newtheorem{exam}[thm]{Example}
\newtheorem{prop}[thm]{Proposition}
\newtheorem{rem}[thm]{Remark}
\newtheorem{cor}[thm]{Corollary}
\newtheorem{defe}[thm]{Definition}
\newcommand{\nc}{\newcommand}
\nc{\renc}{\renewcommand}
\nc{\ssec}{\subsection}
\nc{\sssec}{\subsubsection}
\nc{\on}{\operatorname}
\nc {\ra} {\rightarrow}
\nc {\inj} {\hookrightarrow}
\nc {\lra} {\longleftrightarrow} 
\nc         {\rar}[1]       {\stackrel{#1}{\longrightarrow}}
\nc {\Sl}{\mathfrak{sl}}
\nc {\Gl}{\mathrm{GL}}
\nc{\fm}{\mathfrak{m}}
\nc{\fg}{\mathfrak{g}}
\nc{\fp}{\mathfrak{p}}
\nc{\ft}{\mathfrak{t}}
\nc{\fk}{\mathfrak{k}}
\nc{\fI}{\mathfrak{i}}
\nc{\fh}{\mathfrak{h}}
\nc{\fb}{\mathfrak{b}}
\nc{\fu}{\mathfrak{u}}
\nc{\fn}{\mathfrak{n}}
\nc{\hfg}{\widehat{\fg}}
\nc{\hfh}{\widehat{\fh}}
\nc {\hH}{{\check{H}}}
\nc {\hB}{{\check{B}}}
\nc {\hN}{{\check{N}}}
\nc {\hG}{{{\check{G}}}}
\nc {\cfg}{\check{\fg}}
\nc {\cfb}{\check{\fb}}
\nc {\cfn}{\check{\fn}}
\nc {\cfh}{\check{\fh}}
\nc {\clambda}{\check{\lambda}}
\nc {\bone}{\mathbf{1}}
\nc {\bu}{\mathbf{u}}
\nc {\bv}{\mathbf{v}}
\nc {\bw}{\mathbf{w}}
\nc{\bC}{\mathbb{C}}
\nc{\bM} {\mathbb{M}}
\nc{\bU} {\mathbb{U}}
\nc{\bV} {\mathbb{V}}
\nc{\bW}{\mathbb{W}}
\nc{\bL}{\mathbb{L}}
\nc {\bZ} {\mathbb{Z}}
\nc {\bGm} {\mathbb{G}_m}
\nc {\cD}{\mathcal{D}}
\nc {\cE}{\mathcal{E}}
\nc {\cF}{\mathcal{F}}
\nc {\cG}{\mathcal{G}}
\nc {\cK}{\mathcal{K}}
\nc{\cZ}{\mathcal{Z}}
\nc {\cDt}{\cD^\times} 
\nc {\cA} {\mathcal{A}}
\nc {\cI}{\mathcal{I}}
\nc {\cR}{\mathcal{R}}
\nc {\cJ}{\mathcal{J}}
\nc {\cS}{\mathcal{S}}
\nc {\cO}{\mathcal{O}}
\nc {\cP}{\mathfrak{P}}
\nc {\tU} {\widetilde{U}}
\nc {\Ind}{\mathrm{Ind}}
\nc {\GL}{\mathrm{GL}}
\nc {\RS}{\mathrm{RS}}
\nc {\Fun}{\mathrm{Fun}}
\nc {\Lie}{\mathrm{Lie}}
\nc {\Hom}{\mathrm{Hom}}
\nc {\Vac}{\mathrm{Vac}}
\nc {\Res}{\mathrm{Res}} 
\nc {\ev}{\mathrm{ev}}
\nc{\Spec}{\mathrm{Spec}\,}
\nc {\ord}{\mathrm{ord}}
\nc {\Op}{\mathrm{Op}}
\nc{\Loc}{\mathrm{Loc}}
\nc {\MOp}{\mathrm{MOp}}
\nc {\Conn}{\mathrm{Conn}}
\nc {\MT}{\mathrm{MT}}
\nc{\Sym}{\mathrm{Sym}}
\nc {\modd}{\mathrm{mod}}
\nc {\ad}{\mathrm{ad}}
\nc {\Tr}{\mathrm{Tr}}
\nc{\Kil}{\mathrm{Kil}}
\nc {\End}{\mathrm{End}}
\nc {\can}{\mathrm{can}}
\nc {\Gr}{\mathrm{Gr}}
\nc {\Aut}{\mathrm{Aut}}
\nc {\Der}{\mathrm{Der}}
\nc {\gr}{\mathrm{gr}}
\nc {\sm}{\mathrm{sm}}
\nc {\Map}{\mathrm{Map}}
\nc {\Hitch}{\mathrm{Hitch}}
\nc {\Higgs}{\mathrm{Higgs}}
\nc {\T}{\mathrm{T}}
\nc{\Bun}{\mathrm{Bun}}
\nc {\bP} {\bar{P}}
\nc {\bJ}{\bar{J}}
\nc {\quo}{\mathopen{ /\!/}}
\nc {\llp} {\mathopen{ (\!(}}
\nc {\rrp} {\mathopen{ )\!)}}
\nc {\llb} {\mathopen{ [\![}}
\nc {\rrb} {\mathopen{ ]\!]}}
\nc {\lc} {\mathopen{:\!}}
\nc {\rc}{\mathopen{\!:}}
\nc{\fgbb} {\fg \llb t \rrb}
\nc{\fgpp} {\fg\llp t \rrp}
\nc{\fhbb} {\fh\llb t \rrb}
\nc{\fhpp} {\fh\llp t \rrp}
\nc{\bCpp} {\bC \llp t \rrp}
\nc{\bCbb} {\bC \llb t \rrb}
\nc{\hNpp} {\hN \llp t \rrp}
\nc{\hNbb} {\hN \llb t \rrb}
\nc{\cfbpp} {\cfb \llp t \rrp}
\nc{\cfbbb} {\cfb \llb t \rrb}
\nc {\Gpp} {G\llp t \rrp}
\nc {\Npp} {N\llp t \rrp}
\nc {\Gbb} {G\llb t \rrb}
\nc {\cC} {\mathcal{C}}
\nc {\cB}{\mathcal{B}}
\nc{\ocB}{\bar{\cB}}
\nc{\ocA}{\bar{\cA}}
\nc {\bR}{\mathbb{R}}
\nc {\bQ} {\mathbb{Q}}
\nc {\cL}{\mathcal{L}}
\nc {\bI}{\bar{I}}
\nc{\tg} {\mathtt{g}}
\nc {\tc}{\mathtt{c}}
\nc {\oG} {\overline{G}}
\nc{\ofg}{\overline{\fg}} 
\nc{\Fq} {\mathbb{F}_q}
\nc{\Fqt}{\Fq\llp t \rrp}
\nc{\ocK}{\overline{\cK}}
\nc{\Gal}{\mathrm{Gal}}
\nc{\uG}{\underline{G}}
\nc {\fz}{\frak{z}}
\nc{\cW}{\mathcal{W}}
\nc{\tu}{\tilde{u}}
\nc{\gl}{\frak{gl}}
\nc {\hgl}{\widehat{\gl}}
\nc {\cdet}{\on{cdet}}
\nc {\Comp}{\on{Comp}}
\nc {\Cond}{\on{Cond}}
\nc {\Irr}{\on{Irr}}
\newcommand{\quash}[1]{}  
\nc {\triv}{\on{triv}}
\begin{document}

\title{On the notion of conductor in the local geometric Langlands correspondence} 

\author{Masoud Kamgarpour}

\thanks{The author was supported by an ARC DECRA Fellowship} 

\email{masoud@uq.edu.au}
\address{School of Mathematics and Physics, The University of Queensland, Australia}

\date{\today}

\subjclass[2010]{17B67, 17B69, 22E50, 20G25}

\keywords{Local geometric Langlands, connections, cyclic vectors, opers, conductors, Segal-Sugawara operators, Chervov-Molev-Talalaev operators, critical level, smooth representations, affine Kac-Moody algebra, categorical representations}

\begin{abstract}
Under the local Langlands correspondence, the conductor of an irreducible representation of $\Gl_n(F)$ is  greater than the Swan conductor of the corresponding Galois representation. In this paper, we establish the geometric analogue of this statement by showing that the conductor of a  categorical representation of the loop group is greater than the irregularity of the corresponding meromorphic connection. 
\end{abstract} 

\maketitle 

{\dedication{To Brian Forrest, for inspiring us to do math.}} 

\tableofcontents

\section{Introduction}   
\ssec{Arithmetic local Langlands correspondence} 
Let $F$ be a local non-Archimedean field such as  $\mathbb{Q}_p$ or $\mathbb{F}_q\llp t \rrp$. The local Langlands correspondence for $\GL_n$ relates two different  types of data: 
\[
\boxed{{\textrm{Representations of $\GL_n(F)$}}} \longleftrightarrow \boxed{\textrm{Representations of $\Gal(\bar{F}/F)$}}
\]
After appropriate modifications, the above relationship can be formulated as a bijection, and is now a theorem, cf. \cite{Wedhorn} for a review. This bijection preserves important numerical invariants associated to objects on the two sides. The invariant we consider in this paper is a positive integer known as the \emph{conductor}. 

\sssec{Preservation of conductor} 
 E. Artin defined the notion of conductor for Galois representations. This is a positive integer which, roughly speaking, measures how ramified the representation is.  On the other hand, the notion of conductor for irreducible representations of $\Gl_n(F)$ was defined in \cite{Cass} and \cite{JSPS}. We recall their definition in \S \ref{ss:conductor}. 
 
 It is known that under the local Langlands bijection
 \[ 
 \begin{matrix}
 \boxed{\textrm{conductor of a $\Gl_n(F)$-module}} &  =  & \boxed{\textrm{Artin conductor of the corresponding Galois module.}} 
 \end{matrix} 
 \]
 In fact, preservation of conductors played an important role in one of the proofs of the local Langlands correspondence, cf. \cite[\S 4.2.5]{Wedhorn}.  Our goal is to examine the analogue of this statement in the \emph{geometric} Langlands program. 
 
 An immediate problem is that I do not know what the geometric analogue of the Artin conductor is. This prevents us from discussing the geometric analogue of the above equality. Instead, we content ourselves with establishing a related inequality. 
 
 \sssec{Swan conductor} 
Artin conductor has a variant called the \emph{Swan conductor}. Roughly speaking, the Swan conductor measures how \emph{wildly} ramified the Galois representation is (see \cite{GR} for a review of Artin and Swan conductors). For our purposes, it is sufficient to know that the Artin conductor is greater than or equal to the Swan conductor. 
 
 We can, therefore, summarise the above discussion as follows: under the local Langlands correspondence
\[
\boxed{\textrm{Conductor of a $\Gl_n(F)$-module}} \geq \boxed{\textrm{Swan conductor of the corresponding Galois module.}}
\] 
In this paper, we establish the geometric analogue of this inequality.

\ssec{Dictionary for geometrisation} 
 To formulate the appropriate geometric statement, we use the following table of analogies:  
\[ 
  \begin{tabular}{|c|c|}
  \hline
  \textbf{Number theory} & \textbf{Geometry} \\\hline
  Galois representations & Meromorphic connections \\ \hline
  Swan conductor & Irregularity \\ \hline
   Representations of $\Gl_n(F)$ & Categorical representations of the loop group $\Gl_n\llp t\rrp$ \\ \hline
   Conductor of a $\Gl_n(F)$-module & Conductor of a categorical representation\\ \hline
  \end{tabular} 
\]

Meromorphic connections and their irregularities are classical topics, see, e.g. \cite{Deligne}, \cite{Katz}, \cite{Kedlaya}.  
We shall discuss categorical representations  in \S \ref{s:formulation} where we also recall Frenkel and Gaitsgory's formulation of the local geometric Langlands as a correspondence between the following two types of data: 
\[
\boxed{\textrm{Categorical representations of the loop group}} \longleftrightarrow \boxed{\textrm{Meromorphic connections on the disk}}
\]

\ssec{Rough version of our main result} In \S \ref{ss:conductor}, we define  the conductor of a categorical representation by a straightforward adaption of the definitions of \cite{Cass} and \cite{JSPS}. 
Here is a rough statement of our main theorem. For the precise statement, see \S \ref{ss:main}.
  
\begin{thm} 
 Under the local geometric Langlands correspondence
\[
\boxed{\begin{matrix} \textrm{conductor of a categorical } \\
\textrm{representation of $\Gl_n\llp t \rrp$} \,\,   \end{matrix}} \quad
\geq   \quad \boxed{\begin{matrix} \textrm{irregularity of the corresponding} \\ \textrm{rank $n$-meromorphic connection.}\, \,  \end{matrix}}
\]
\end{thm}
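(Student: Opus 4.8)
The plan is to work with the critical-level incarnation of the correspondence discussed in \S\ref{s:formulation}. There the categorical representation of $\Gl_n\llp t\rrp$ attached to an $\Gl_n$-oper $\chi$ on the punctured disk $\cDt$ is the category $\hfg_{\mathrm{crit}}\textrm{-mod}_\chi$ of smooth modules over the affine Kac--Moody algebra $\hfg=\hgl_n$ at the critical level on which the center $\fz(\hfg)=\Fun\Op_{\Gl_n}(\cDt)$ acts through the character $\chi$, and $\Irr(\chi)$ is the irregularity of the meromorphic connection underlying $\chi$. Unravelling the definition of the conductor from \S\ref{ss:conductor}, the theorem amounts to the implication: if $\hfg_{\mathrm{crit}}\textrm{-mod}_\chi$ contains a non-zero object equivariant for the level-$m$ congruence subgroup $K^{(m)}\subset\Gl_n\llb t\rrb$ appearing there, then $\Irr(\chi)\le m$ --- I will in fact obtain $\le m-1$. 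The feature of $K^{(m)}$ that the argument exploits is that it is a \emph{mirabolic} congruence subgroup: it constrains only one distinguished row of a matrix modulo $t^m$, the remaining rows being unconstrained.

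The first step is to replace the categorical hypothesis by a single module with a cyclic vector. A non-zero $K^{(m)}$-equivariant object $M$ contains --- by pro-unipotence of the radical of $K^{(m)}$ and the existence of invariants of a pro-unipotent group acting on a non-zero smooth vector space --- a non-zero vector $v$ annihilated by the pro-nilpotent radical of $\fk^{(m)}=\Lie K^{(m)}$. Replacing $M$ by the $\hfg$-submodule generated by $v$, we may assume $v$ is cyclic; since the center acts on $M$ through $\chi$, any central element killing $v$ is killed by $\chi$. From the shape of $\fk^{(m)}$ one reads off the basic estimate: the current $e_{ij}(z)$ applied to $v$ has a pole in $z$ of order at most $1$ when $i$ is not the distinguished row, and of order at most $m$ when $i$ is the distinguished row.

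The core of the proof is then an explicit computation with the Segal--Sugawara operators --- for $\hgl_n$, with the Chervov--Molev--Talalaev operators: the coefficients $W_1(z),\dots,W_n(z)$ of the quantum characteristic polynomial $\cdet(\partial_z-E(z))=\partial_z^{n}+W_1(z)\partial_z^{n-1}+\dots+W_n(z)$ of the matrix $E(z)=(e_{ij}(z))$, which topologically generate $\fz(\hfg)$ and for which $\partial_z^{n}+\chi(W_1(z))\partial_z^{n-1}+\dots+\chi(W_n(z))$ is exactly the scalar differential operator presenting the oper $\chi$. Each monomial occurring in $W_i(z)$ is a normally ordered product of $p$ currents and $d$ derivatives with $p+d=i$, and its $p$ currents have pairwise distinct row indices, so at most one of them involves the distinguished row. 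Since a derivative raises the pole order by one and normal ordering makes pole orders additive, the estimate of the previous paragraph yields that $W_i(z)\cdot v$, hence $\chi(W_i(z))$, has a pole of order at most $m+i-1$. By the classical Newton polygon formula, the irregularity of the monic scalar operator $\partial_z^{n}+\sum_i\chi(W_i(z))\partial_z^{n-i}$ equals $\max_i\,(p_i-i)^{+}$, where $p_i$ is the pole order of $\chi(W_i(z))$; this is at most $m-1$. Therefore $\Irr(\chi)\le m-1<m$, which proves the theorem.

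I expect the main obstacle to be precisely this computation and the bookkeeping it requires: making the constraint $p+d=i$ on the monomials of $W_i(z)$ rigorous, and verifying that the $\rho$-shift present in the critical-level Chervov--Molev--Talalaev operators contributes only regular or first-order terms and so leaves the estimate intact. What makes this genuinely use the mirabolic shape of $K^{(m)}$ is the following: with the principal congruence subgroup $t^m\mathfrak{gl}_n\llb t\rrb$ in place of $K^{(m)}$, the currents of \emph{every} row would acquire a pole of order $m$ on the cyclic vector and one would obtain only $\Irr(\chi)\lesssim nm$, which is too weak; it is the constraining of a single row that forces the poles of the $W_i(z)$ to grow like $m+i$ rather than $im$, and hence gives the sharp bound. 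A secondary point requiring some care is the reduction in the second paragraph --- extracting from an arbitrary $K^{(m)}$-equivariant object a cyclic vector with the stated annihilation properties --- which relies on the pro-unipotence of the radical of $K^{(m)}$ together with the finite-dimensionality of its reductive quotient.
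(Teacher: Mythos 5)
Your proposal follows essentially the same route as the paper: reduce to a vector annihilated by the pro-nilpotent (mirabolic) radical $\fk_m^0$, bound the pole order of the Chervov--Molev--Talalaev coefficients acting on that vector by $m+i-1$ using the fact that each monomial of the column determinant involves at most one current from the distinguished row, identify these central elements with the oper coefficients via the affine Harish--Chandra/Miura homomorphism, and conclude $\Irr(\chi)\le m-1$ from the Komatsu--Malgrange formula. This is precisely the argument of the paper (Lemma \ref{l:cdet}, Propositions \ref{p:estimate} and \ref{p:mainEstimate}, Corollary \ref{c:opers}), so the proposal is correct and not a genuinely different proof.
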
 
As we shall see, the main ingredient of the proof is controlling the action of a specific set of Segal-Sugawara operators, defined by Chervov and Molev and Talalaev \cite{CM} on certain critical level representations of the affine Kac-Moody algebra $\hgl_n$.

\ssec{Concluding remarks} 
One can ask if there is a version of the above theorem for groups other than $\GL_n$. This is a question that already makes sense in the arithmetical setting; i.e., one may wonder if there is an analogue of conductor for smooth representations of a reductive $p$-adic group. We are unaware of such a notion in general, however, in \cite{Tsai}, a notion of new forms for odd special orthogonal groups has been defined. Thus, it appears one can define conductors for representations of such group over local fields. However, as far as we know, this has not been pursued further in the literature.

In general, we cannot expect the equality to hold in the above theorem. The best one can hope for is that the conductor of a categorical representation of $\GL_n(\!(t)\!)$ is less than or equal the irregularity of the corresponding connection plus $n$. For some applications of the conductor in local geometric Langlands correspondence, we refer the reader to \cite{Luu}.

 As mentioned above, the local Langlands correspondence is a theorem. However, I do not know of a straightforward construction which takes as input a Galois representation of $\Gal(\bar{F}/F)$ and produces a (smooth irreducible) representation of $\Gl_n(F)$. The situation in the geometric setting is, in some sense, reversed. There is a  construction of (what should be) the local Langlands correspondence (see \S \ref{ss:locGeom}), but the fact that this construction satisfies the correct properties remains highly conjectural.  
 
 Frenkel and Gaitsgory have extensively analysed the local geometric Langlands correspondence when the underlying connection is regular singular (i.e., its irregularity equals zero). Previously, we have examined some of the features of the irregular case \cite{Masoud}, \cite{CK}, \cite{MasoudTravis}. This paper is an attempt to understand yet another aspect of the local geometric Langlands correspondence in the presence of irregular connections.

\ssec{Acknowledgements} I would like to thank Tsao-Hsien Chen, Kiran Kedlaya, Martin Luu, Alex Molev, Arun Ram, Sam Raskin, and Claude Sabbah for helpful conversations.

\ssec{Organisation of the text} In \S \ref{s:formulation} we recall the definitions of meromorphic connections, categorical representations, and the local geometric Langlands correspondence. We then define the notion of conductor of categorical representations, give a precise version of our main theorem, and outline  the proof. 

In \S \ref{s:smoothRep} we study smooth representations of an arbitrary affine Kac-Moody algebra. We will recall the definition of the completed enveloping algebra and its relationship to the affine vertex algebra. In addition, we introduce a class of modules, called \emph{root modules}, and investigate the action of centre on these modules. The main result of this section is a vanishing result regarding the action of Fourier coefficient of Segal-Sugawara operators on root modules. 

In \S \ref{s:gln}, we specialise to the case of $\hgl_n$. We recall Chervov and Molev's explicit description of a complete set of Segal-Sugawara operators, and identify them as the ones arising from functions on opers. We then use the above-mentioned vanishing result to establish the main theorem.

\section{Formulation of the main result}\label{s:formulation}
In \S \ref{ss:connections}, we recall a few facts we need about meromorphic connections, including the cyclic vector theorem and the Komatsu-Malgrange formula for irregularity. In \S \ref{ss:catRep}, we will discuss the main class of categorical representations which we consider in this article. In \S \ref{ss:locGeom}, we  recall the  Feigin-Frenkel theorem and Frenkel and Gaitsgory's version of the local geometric Langlands \cite{FG}. In \S \ref{ss:conductor}, we define the notion of conductor for a categorical representation of $\Gl_n\llp t\rrp$. Armed with this information, we give a precise version of our result in \S \ref{ss:main} and give a sketch of the proof in \S \ref{ss:sketch}.

\subsection{Meromorphic connections}\label{ss:connections}  Let $\cK=\bCpp$ and  let $\cDt=\Spec(\cK)$  be the punctured disk. Let $V$ be a finite dimensional vector space over $\cK$. A \emph{differential operator} on $V$ is a $\bC$-linear map $D: V\ra V$ satisfying 
\[
D(av)=(\partial_t a)v + aD(v),\quad \quad  a=a(t)\in \cK, \quad v\in V.
\]
A \emph{connection} on $\cDt$ is a pair $(V,D)$ consisting of a vector space $V$ over $\cK$ together with a differential operator $D:V\ra V$.  We say that $\nabla=(V,D)$ has rank $n$ if $\dim(V)=n$.  We denote by $\Conn_n(\cDt)$  the set of all rank $n$ connections on $\cDt$. 

\sssec{Cyclic vector theorem} Let $\nabla=(V,D)$ be a connection on $\cDt$. A \emph{cyclic vector} for $\nabla$ is a vector $v\in V$ such that 
\[
\{v,D.v\dots, D^{n-1}v\}
\]
 is a basis for $V$. In this case, the differential operator $D$ is completely determined by the $n$-tuple $(a_1,\dots, a_n)\in \cK^n$ defined by the equation
\begin{equation}\label{eq:ntuple}
D^nv=a_{1}D^{n-1}v+\cdots + a_{n-1}Dv+ a_{n}v.
\end{equation}
 Note, however, that such $n$-tuple is not unique; it depends on the choice of the cyclic vector.  
 
 According to a theorem of Deligne (cf. \cite[p42]{Deligne} and  \cite[\S 5.6]{Kedlaya}), every connection in $\Conn_n(\cDt)$ has a cyclic vector.

\sssec{Opers} An \emph{oper} on $\cDt$ is a triple $(V,D,v)$ where $v$ is a cyclic vector for the connection $(V,D)\in\Conn_n(\cDt)$. We note that opers for general reductive groups $G$ were defined by Beilinson and Drinfeld \cite{BDOper} following earlier work of Drinfeld and Sokolov \cite{DS}.  In the case of $G=\Gl_n$, after a slight modification (cf. \cite{GaitsgorySeminar}), their definition becomes equivalent to the one given above, cf. \cite[\S 16.1]{FBz}.

Let $\Op_n(\cDt)$ denote the set of all opers $(V,D,v)$ with $\dim(V)=n$. By the above discussion, specifying an oper amounts to specifying an $n$-tuple $(a_1,\dots, a_n)\in \cK^n$; thus, we have an isomorphism
\[
\Op_n(\cDt) \simeq \cK^n. 
\]
Therefore, we see that $\Op_n(\cDt)$ has a very simple description whereas $\Conn_n(\cDt)$ is complicated. The two spaces are, however, intimately related. 
Namely, we have a canonical forgetful map
\[
\begin{matrix}
p: & \Op_n(\cDt) & \longrightarrow & \Conn_n(\cDt)\\
 &  (V,D,v) & \longmapsto & (V,D).
\end{matrix} 
\]
The cyclic vector theorem states that this map is surjective. Note that the map $p$ is by  no means injective. It is known that the geometry of the fibres is related to the affine Springer fibres, \cite{FZ}. 

\sssec{Irregularity} 
To a connection $\nabla \in \Conn_n(\cDt)$ one can associate a non-negative integer $\Irr(\nabla)$ called the \emph{irregularity}. This integer measures, in some sense, how \emph{wildly singular} the connection $\nabla$ is. 

There are several ways for defining $\Irr(\nabla)$. For instance, one can define it as the sum of slopes of the connection $\nabla$ (cf. \cite[\S 2.3]{Katz}). For us, it will be convenient to define this invariant using cyclic vectors, or equivalently, using opers. 

\begin{defe} The  \emph{irregularity} of an oper $\chi=(a_1,\dots, a_n)\in \Op_n(\cDt)\simeq \cK^n$ is defined by
\begin{equation}\label{eq:Irr} 
\boxed{\Irr(\chi):=\max \{i-\nu(a_{n-i})\}_{i=0,\dots,n-1} - n.}
\end{equation} 
Here $-\nu(a_{n-i})$ denotes the order of pole of the Laurent series $a_{n-i}\in \cK$. 
\end{defe} 
One can show that the irregularity of an oper depends only on the underlying connection.  In other words, 
\begin{equation} \Irr(\chi)=\Irr(\nabla), \quad \quad \textrm{where $\nabla=p(\chi)$}.
\end{equation} 
See, e.g., \cite[\S 7]{Kedlaya}. We note that \eqref{eq:Irr} is sometimes known as the \emph{Komatsu-Malgrange formula} (cf. \cite{Komatsu}, \cite{Malgrange}).\footnote{I thank Claude Sabbah for bringing this equality to my attention.}

\sssec{Relationship to Galois representations} It has been known for a long time that connections on the punctured disk behave very similarly to finite dimensional representations of $\Gal(\mathbb{Q}_p)$ and $\Gal(\Fq\llp t \rrp)$, cf. \cite{Katz}. As far as I know, there is no formal mathematical theory embodying both worlds. In addition, there is no analogue of opers in the arithmetic world. The existence of opers is one of the key simplifying ingredients in the geometric Langlands program.

\subsection{Categorical representations} \label{ss:catRep} Having discussed the geometric analogue of Galois representations, we now turn our attention to the geometric analogue of representations of $\Gl_n(F)$. According to \cite{FG}, these should be certain categorical representations of the loop group $\Gpp$. As a toy model, 
 let $G$ be an algebraic group and $\fg$ denote its Lie algebra. Then $G$ acts on the category $\fg-\modd$ by auto-equivalences:  $g\in G$ sends a representation $\fg\ra \End(V)$ to a new representation: the one defined by the composition
\[
\fg\rar{\mathrm{Ad(g)}} \fg \ra \End(V).
\]
 This is an example of a categorical action. It is also possible to ``decompose'' this categorical representation using the centre  of the universal enveloping algebra. Namely, for every character $\chi$ of the centre, let $\fg-\modd_\chi $ denote the full subcategory of $\fg-\modd$ consisting of those modules on which $\cZ(\fg)$ acts by the character $\chi$. Then $\fg-\modd_\chi$ is preserved under the action of $G$; thus, it is a sub-representation of $\fg-\modd$. We will be interested in the analogous categorical representations for the \emph{loop group}. 

So let  $\fgpp:=\fg\otimes \bCpp$ denote the \emph{loop algebra}. The corresponding group $\Gpp$ is the \emph{loop group} associated to $G$. It is known that $\Gpp$ has a structure of an ind-scheme, though this is used only implicitly in this article.   
The categorical representations we study arise from the action $\Gpp$ on the category of $\fgpp$-modules. 
Actually, in the case $\fg$ is reductive, it is fruitful to consider not the loop algebra itself, but its universal central extension known as the \emph{affine Kac-Moody algebra} $\hfg$.

 Recall that representations of $\hfg$ have a parameter, a complex number $k$, which is called the \emph{level} \cite{Kac}. We let $\hfg_k-\modd$ denote the category of (smooth) representations of $\hfg$ at level $k$. The adjoint action of the loop group $\Gpp$ on $\hfg$ preserves the central line; thus, $\Gpp$ acts on the category $\hfg_k-\modd$. The centre of the (completed) universal enveloping algebra is nontrivial only when $k$ is a specific complex number called the \emph{critical level}.\footnote{In the normalisation of \cite{Kac}, the critical level for $\widehat{\mathfrak{sl}}_n$ is $k=-n$.} Thus the procedure of decomposing a categorical representation according to central characters can only be carried out at the critical level. 
 
 Let $\cZ_c$ denote the centre of the completed universal enveloping algebra of the affine Kac-Moody algebra $\hfg$ at the critical level. 
 Now every point $\chi\in \Spec(\cZ_c)$ defines a character of the centre. Let $\hfg_c$ denote the category of (smooth) representations at the critical level, and let $\hfg_c-\modd_\chi\subset \hfg_c-\modd$ denote the full subcategory consisting of those representations on which the centre acts according to the character $\chi$. It is easy to see that the action of $\Gpp$ on $\hfg_{c}-\modd$ preserves $\hfg_c-\modd_\chi$. 
 
 Frenkel and Gaitsgory propose that the categorical representations $\hfg_c-\modd_\chi$ should be the geometric analogue of representations of $\Gl_n(F)$. We refer the reader to the introduction of \cite{FG} for a discussion about the motivations for this proposal. The relationship between these categorical representations and meromorphic connections emerges through the Feigin-Frenkel Theorem.

  \subsection{Feigin and Frenkel's theorem and  geometric Langlands}\label{ss:locGeom} Henceforth, we consider the case $\fg=\gl_n$. 
   According to a fundamental theorem of Feigin and Frenkel \cite{FF}, \cite{FrenkelBook}, we have an isomorphism of commutative (topological) algebras 
\begin{equation}\label{eq:FF}
 \cZ_c \simeq \bC[\Op_n(\cDt)].
 \end{equation} 
 
 Frenkel and Gaitsgory \cite{FG} formulate the local geometric Langlands program as a correspondence 
 \[
 \begin{matrix} 
 \boxed{\textrm{$\Conn_n(\cDt)$}} & \longleftrightarrow & \boxed{\textrm{Categorical representations of $\Gl_n\llp t \rrp$}} \\\\
 \nabla=p(\chi) & \longleftrightarrow & (\hgl_n)_c-\modd_\chi.\\\\
 \end{matrix} 
 \]
 In more details, given an oper $\chi\in \Op_n(\cDt)$, one has a  character of the centre $\cZ_c$ and, therefore, a categorical representation $(\hgl_n)_c-\modd_\chi$. Frenkel and Gaitsgory propose that this is the categorical representation corresponding to the connection $p(\chi)$ (the underlying connection of the oper $\chi$).  
 
  For this correspondence to be well-defined, one must have that $\hfg_c-\modd_\chi$ depends only on the connection $\nabla$; i.e., it is independent of the chosen cyclic vector. Frenkel and Gaitsgory conjecture that this is indeed the case.

\subsection{Conductor of categorical representations} \label{ss:conductor} We now discuss how to define the conductor of  categorical representations of the loop group $\Gpp$. 
To this end, we first recall the definition of the conductor of an irreducible smooth representation of $\Gl_n(F)$, cf. \cite[\S 2.5.4]{Wedhorn}.
 Let $\cO$ denote the ring of integer of the non-Archimedean local field $F$ and let $\cP$ denote its maximal ideal. For every nonnegative integer $m$, let $\fk_m$ denote the Lie subalgebra of $\gl_n(\cO)$ defined by 
\begin{equation}\label{eq:pm}
\fk_m:=
\begin{bmatrix} 
\cO & \cdots & \cO  \\
\vdots & \ddots  &  \vdots \\
\cO & \cdots & \cO  \\
\cP^m & \cdots &  \cP^m
\end{bmatrix}.
\end{equation}
Let $K_m$ denote the corresponding subgroup of $\Gl_n(F)$. Let $V$ be a (smooth irreducible) $\Gl_n(F)$-module. Then 
\[
\textrm{Conductor of $V$ := smallest nonnegative  integer $m$ such that $V$ has a $K_m$-invariant vector.}
\] 

We now move to the geometric theory, so we replace $F$ by $\cK=\bCpp$ and use $\cO$ (resp. $\cP$) to denote the ring of integers of $\cK$ (resp. its maximal ideal). Let $\fk_m$ be the subalgebra of the loop algebra $\fgpp$ defined as in \eqref{eq:pm}. Let $K_m$ denote the corresponding subgroup of the loop group $\Gpp$. It is known that $K_m$ has a structure of a pro-algebraic group over $\bC$. 

In categorical representation theory, the notion of ``equivariant objects'' replaces the concept of ``invariant vectors'' \cite[\S 20]{FG}, \cite[\S 10]{FrenkelBook}. Thus, to define the notion of conductor for categorical representations, we should consider $K_m$-equivariant objects. 

Before giving the definition, let us recall a general fact. Let $\fk$ be a of $\fg(\cO)$ and let $K$ denote the corresponding subgroup of the loop group. Suppose $K$ has a structure of a pro-algebraic group over $\bC$. It turns out that for the categorical representations $\hfg_c-\modd_\chi$, an object $V$ is $K$-equivariant if and only if $V$ can be endowed with the structure of a $(\hfg_c, \fk)$ Harish-Chandra module, cf. \cite[\S 10]{FrenkelBook}. This means that $V$ is a $\hfg_c$-module such that the $\fk$-module obtained by restriction is integrable. Thus, we can define the notion of conductor as follows:

\begin{defe} The conductor of a categorical representation $\hfg_c-\modd_\chi$, denoted by $\Cond(\hfg_c-\modd_\chi)$, is the smallest nonnegative integer $m$ such that $\hfg_c-\modd_\chi$ contains a $(\hfg_c,\fk_m)$ Harish-Chandra module. 
\end{defe} 

We note that it is not obvious that such an integer $m$ exists. In the arithmetical setting (i.e. over a non-Archimedean field), this follows from the fact that every smooth irreducible representation of $\Gl_n(F)$ is \emph{admissible}. It would be interesting to understand the geometric analogue of this statement.

\subsection{Main Theorem} \label{ss:main} Recall that associated to an oper $\chi\in \Op_n(\cDt)$, we have the corresponding connection $\nabla=p(\chi)\in \Conn_n(\cDt)$ and the corresponding categorical representation $\hfg_c-\modd_\chi$. 

\begin{thm}\label{t:main} For all $\chi \in \Op_n(\cDt)$, we have 
\begin{equation} 
\Cond(\hfg_c-\modd_\chi)\geq  \Irr(\nabla).
\end{equation} 
\end{thm}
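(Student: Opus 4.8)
The plan is to argue by contraposition on the level of the Feigin–Frenkel isomorphism. Suppose $\hfg_c-\modd_\chi$ contains a $(\hfg_c,\fk_m)$ Harish–Chandra module $V$; I want to deduce that $\Irr(\nabla)\le m$, where $\nabla=p(\chi)$. The point is that the center $\cZ_c\simeq\bC[\Op_n(\cDt)]$ acts on $V$ through the character $\chi$, while $V$ carries a compatible action of the pro-algebraic group $K_m$ (equivalently, the restriction of the $\hfg_c$-action to $\fk_m$ integrates to $K_m$). I want to combine these two facts to force the pole orders of the tuple $\chi=(a_1,\dots,a_n)$ to be bounded in terms of $m$, which by the Komatsu–Malgrange formula \eqref{eq:Irr} is exactly a bound on $\Irr(\chi)=\Irr(\nabla)$.

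First I would set up the relevant central elements explicitly. The Feigin–Frenkel center is generated, via the Segal–Sugawara construction, by a distinguished family of operators whose Fourier coefficients live in the completed enveloping algebra; in the $\gl_n$ case Chervov and Molev give an explicit set of such generators, and (as the excerpt promises to show in \S\ref{s:gln}) these correspond precisely to the coordinate functions $a_1,\dots,a_n$ on $\Op_n(\cDt)\simeq\cK^n$. Concretely, writing the $i$-th Segal–Sugawara current as $S_i(t)=\sum_j S_{i,j}t^{-j-i}$, the Fourier mode $S_{i,j}$ acts on $V$ as the scalar obtained by extracting the $t^{-j-i}$-coefficient of the appropriate entry of $\chi$. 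So controlling the pole order of $a_{n-i}$ is the same as controlling for which negative $j$ the scalar $S_{i,j}$ can be nonzero on $V$.

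The heart of the argument is then the ``vanishing result'' flagged in \S\ref{s:smoothRep}: on a module $V$ whose restriction to $\fk_m$ is integrable (a ``root module'' in the paper's terminology), the Fourier coefficients $S_{i,j}$ of the Segal–Sugawara operators must annihilate $V$ once $j$ is sufficiently negative relative to $m$. The mechanism I expect is this: $S_{i,j}$ is an infinite sum of (normally ordered) quadratic-or-higher expressions in the basis elements $e_{ab}\otimes t^{k}$ of $\fgpp$; when $j$ is very negative, every term in this sum, after the normal ordering is resolved, ends in an element of $\fk_m$ — i.e. lies in $U(\hfg_c)\cdot\fk_m$ — because the structure of $\fk_m$ (it contains $\fg\otimes t\cO$ together with the lower row $\mathcal P^m$-worth of $\fg\otimes\cO$) forces high-order modes into $\fk_m$. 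Since $V^{\fk_m}$-integrability means $\fk_m$ acts locally finitely and the $\fk_m$-module generated is the whole of $V$ by equivariance, applying such an operator to a $\fk_m$-finite generating set and using that $\cZ_c$ is central (so $S_{i,j}$ commutes past everything) pins the scalar $\chi(S_{i,j})$ to zero. Carrying this out requires a careful bookkeeping of which modes of $e_{ab}\otimes t^k$ land in $\fk_m$ and tracking the normal-ordering corrections, and this is where I expect the real work — and the main obstacle — to lie: the normal-ordering terms are lower order but shifted, and one must check they do not spoil the vanishing. Granting the vanishing statement, the conclusion is immediate: $\chi(S_{i,j})=0$ for all $j$ below the threshold determined by $m$ translates, under the Chervov–Molev identification, into $-\nu(a_{n-i})\le i+m$ for every $i$, hence $\max_i\{i-\nu(a_{n-i})\}\le n+m$, i.e. $\Irr(\chi)=\max_i\{i-\nu(a_{n-i})\}-n\le m$. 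Taking $m=\Cond(\hfg_c-\modd_\chi)$ gives $\Cond(\hfg_c-\modd_\chi)\ge\Irr(\nabla)$.

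A couple of subsidiary points I would want to nail down along the way. I should confirm that a $(\hfg_c,\fk_m)$ Harish–Chandra module in $\hfg_c-\modd_\chi$ is automatically nonzero and generated over $\fk_m$ by its $\fk_m$-finite vectors (so that the vanishing argument applies to the whole module), and that $\fk_m$ is large enough that ``integrable $\fk_m$-module'' genuinely controls high modes — in particular that $\fk_m\supset\fg\otimes t^N\cO$ for $N$ comparable to $m$. I would also want to double-check the exact indexing in the Komatsu–Malgrange formula against the Chervov–Molev normalization of the Segal–Sugawara currents, since an off-by-one there would weaken the inequality to $\Irr(\nabla)-c$ for some constant $c$; the paper's concluding remarks suggest the sharp form has an additive slack of at most $n$ on the other side, consistent with the inequality being stated without slack here.
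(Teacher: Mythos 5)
Your overall strategy coincides with the paper's: identify the coordinate functions on $\Op_n(\cDt)\simeq\cK^n$ with Fourier modes of explicit Segal--Sugawara vectors, show that these modes annihilate a suitable vector in a $K_m$-equivariant object, and convert the resulting bound on pole orders into a bound on the irregularity via the Komatsu--Malgrange formula. But there is a genuine gap at the quantitative heart of the argument, and you have located the difficulty in the wrong place. The mechanism you describe --- every term of a Sugawara mode, after normal ordering, ends in an element of $\fk_m$ --- only yields the generic estimate valid for an arbitrary root subalgebra (cf.\ \eqref{eq:congruence}): a degree-$\ell$ mode $S_{\ell,[N]}$ is a sum of monomials with up to $\ell$ factors, each of which can cost on the order of $m$ before its mode lands in the subalgebra, so the threshold obtained this way is $N\gtrsim \ell m$, not $N\geq m+\ell-1$. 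That weaker bound translates into $\Irr(\nabla)\lesssim nm$, which does not prove the theorem. The sharp threshold requires two specific inputs absent from your proposal: (i) one must pass from $\fk_m$ to its pro-nilpotent radical $\fk_m^0$ of \eqref{eq:km0}, which is expensive ($\cP^m$) only in the last row and essentially free elsewhere, and observe that a $(\hfg_c,\fk_m)$ Harish--Chandra module contains a vector annihilated by $\fk_m^0$, i.e.\ receives a nonzero map from the root module $V_{\fk_m^0}$; and (ii) one must use the \emph{column-determinant} form of the Chervov--Molev--Talalaev generators, for which each monomial $e_{i_1j_1}[u_1]\cdots e_{i_kj_k}[u_k]$ occurring in $S_\ell$ contains \emph{at most one} factor with row index equal to $n$ (Lemma \ref{l:cdet}). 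It is exactly this combinatorial property that caps the total cost at $m+k-1$ and produces $S_{\ell,[N]}v=0$ for $N\geq m+\ell-1$; Remark \ref{r:false} notes that the property fails for other natural complete sets of Segal--Sugawara vectors, so the theorem cannot be obtained with an unspecified choice of generators. The normal-ordering corrections you single out as the main obstacle are not where the work lies.

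A secondary point: your final bookkeeping ($-\nu(a_{n-i})\leq i+m$ yielding $\max_i\{i-\nu(a_{n-i})\}\leq n+m$) is internally inconsistent as written. The correct chain is $-\nu(a_\ell)\leq m+\ell-1$ for each $\ell$, hence $i-\nu(a_{n-i})\leq m+n-1$ for each $i$, hence $\Irr(\chi)\leq m-1$. You flagged the indexing yourself, so this is repairable, but the displayed implication does not follow from your stated intermediate bound.
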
 

The inequality in the theorem can be strict.\footnote{From the arithmetic perspective, this is not a surprise since the irregularity is the geometric analogue of the Swan conductor which is, in general, smaller than the Artin conductor.}   For instance, one can show (cf. \cite[\S 10.3.2]{FrenkelBook}) that
\[
\textrm{conductor of $\hfg_c-\modd_\chi$ is zero} \quad \iff\quad  \textrm{$\nabla$ is trivial}.
\]
 In particular, if $\nabla$ is a regular singular but nontrivial connection on $\cDt$, then irregularity of $\nabla$ is zero while the conductor of $\hfg_c-\modd_\chi$ is greater than zero.

\subsection{Outline of the proof}\label{ss:sketch} We now give an outline of the proof and the structure of the what comes next. 

  As we shall see in \S \ref{ss:proof}, 
 it is easy to reduce the problem to a representation-theoretic statement. Namely, let $\fk_m^0$ denote the pro-nilpotent radical of  $\fk_m$. Let $V\in \hfg_c-\modd_\chi$. Then Theorem \ref{t:main} reduces to the following statement: 

\begin{equation} \label{eq:reptheory}
\textrm{If $V$ contains a vector $v$ with $\fk_m^0.v=0$, then $\Irr(\chi)\leq m$}. 
\end{equation} 

Now we know that the oper $\chi$ can be represented by an $n$-tuple 
\[
(a_1,\dots, a_n),\quad \quad a_i=\sum_{k\in \bZ} a_{i,k} t^{-k-1}\in \cK.
\] 
To prove the above statement, we need to control the singularities of each $a_i$. 

To be more precise, for $i\in \{1,2,\dots, n\}$ and $k\in \bZ$ define functions
\[
\begin{matrix} 
 v_{i,k} & : & \Op_n(\cDt) & \longrightarrow &  \bC, \\
 \chi & = & (a_1,\dots, a_n) & \longmapsto & a_{i,k}.\\
 \end{matrix} 
 \]
  Let $S_{i,k}\in \cZ_c$ denote the corresponding elements of $\cZ_c$. Using the Komatsu-Malgrange Formula, the statement \eqref{eq:reptheory} reduces to the following: 
\begin{equation}\label{eq:vanish}
S_{i,r}.v=0, \quad \quad r\geq m+i-1, \quad \quad \forall \, i\in \{1,2,\dots, n\}. 
\end{equation} 

The following observation will play a key role in proving this vanishing statement: 
\begin{equation} \label{eq:CMDescribe}
\textrm{The $S_{i,r}$'s are essentially the Segal-Sugawara operators defined in \cite{CM}.}
\end{equation} 
In other words, the central elements $S_{i,r}$ which are defined, rather abstractly,   via coefficients of opers, have in fact a very explicit description provided by Chervov and Molev and Talalaev. This key property of these operators is a consequence of the description of their image under the affine Harish-Chandra homomorphism.  Section \S \ref{s:gln} is devoted to proving the statements  \eqref{eq:CMDescribe} and \eqref{eq:vanish}.

With the explicit description of $S_{i,r}$'s at hand, proving Equation \ref{eq:vanish} then follows from the properties of the Fourier coefficients of the vertex operators in the affine vertex algebras and their action on smooth modules.  The relevant properties of these vertex operators are established in \S \ref{s:smoothRep}.



\section{Recollections on smooth representations} \label{s:smoothRep}

In this section, we study the action of the centre of the (completed) universal enveloping algebra of an (arbitrary) affine Kac-Moody algebra on certain smooth modules, which we call root modules. To this end, we recall the definition of the affine vertex algebra and Segal Sugawara operators. We then prove a vanishing result about the action of Segal Sugawara operators on critical level root modules. This result, which also appeared in \cite{CK}, will be further refined in \S \ref{s:gln} in the case of $\hfg=\hgl_n$.

 \subsection{Smooth modules for affine Kac-Moody algebras}\label{ss:smoothRep} 
 \sssec{Loop space and arc space} 
 For a vector space $V$, we write 
 \[
 \begin{matrix} V\llb t \rrb & \textrm{for the (formal) arc space} & V\otimes \bCbb, \\
V\llp t \rrp &  \textrm{for the (formal) loop space}  & V\otimes \bCpp.
\end{matrix} 
\]
 If $V=\fg$ is a Lie algebra, then $\fg\llb t \rrb$ and $\fg\llp t \rrp$ are also Lie algebras with bracket
 \[
[x\otimes t^m, y\otimes t^n] = [x,y]\otimes t^{m+n}, \quad \quad x,y\in \fg. 
\]

\sssec{Affine Kac-Moody algebras} 
Let $\fg$ be a simple finite dimensional Lie algebra. Let $\kappa$ be an invariant non-degenerate bilinear form on $\fg$. The affine Kac-Moody algebra $\hfg_\kappa$  is defined to be the central extension
\begin{equation}\label{eq:central}
0\ra \bC.\bone \ra \hfg_\kappa\ra \fgpp \ra 0,
\end{equation} 
with the two-cocycle defined by the formula 
\begin{equation}\label{eq:2cocycle}
(x\otimes f(t), y\otimes g(t)) \mapsto -\kappa(x,y).\,\Res_{t=0} \left(f(t)\frac{d}{dt}g(t)\right).
\end{equation} 
Here, $\frac{d}{dt}g(t)$ denotes the derivative $g(t)\in \fgpp$ and $\Res_{t=0}$ denotes the coefficient of $t^{-1}$.\\

A $\hfg_\kappa$-module is a vector space equipped with an action of $\hfg_\kappa$ such that the central element $\bone\in \bC.\bone \subset \hfg_\kappa$ acts as identity. Let  
\[
U_\kappa(\hfg):=U(\hfg_\kappa)/(\bone-1)
\]
 be the quotient of the universal enveloping algebra by the ideal generated by $\bone-1$. Then $U_\kappa(\hfg)$ is the universal enveloping algebra at level $\kappa$ and there is an equivalence of  categories  
 \[
\begin{matrix} 
U_\kappa(\hfg)-\modd  & \simeq & \hfg_\kappa-\textrm{modules}
 \end{matrix} 
 \]

\sssec{Smooth modules} 
A module $V$ over $\hfg_\kappa$ is \emph{smooth} if for every $v\in V$ there exits a positive integer $N_v$ such that 
\[
t^{N_v} \fgbb.v=0.
\]
In \cite{Kac}, these are called \emph{restricted} modules. 

 Let $\hfg_\kappa-\modd$ denote the category of \emph{smooth} $\hfg_\kappa$ modules. Let $\tU_\kappa(\hfg)$ denote the \emph{completed} universal enveloping algebra at level $\kappa$. By definition, 
 \[
 \tU_\kappa(\hfg)=\varprojlim U_\kappa(\hfg)/I_N,
 \]
 where $I_N$ is the left ideal generated by $t^N \fgbb$. Then, there is an equivalence of categories 
  \[
 \tU_\kappa(\hfg)-\modd\simeq \hfg_\kappa-\modd.
 \] 

Let $\cZ_\kappa(\hfg)$ denote the centre of $\tU_\kappa(\hfg)$. One can show that $\cZ_\kappa$ is trivial for all but one value of $\kappa$. When $\kappa$ is the \emph{critical} level, that is $\kappa$ equals $-\frac{1}{2}$ times the Killing form, then the centre $\cZ_c(\hfg)$ is very interesting. In what follows, we define certain smooth modules and study the action of $\cZ_c(\hfg)$ on them.

 \subsection{Root subalgebras and modules} \label{ss:root} 
  \subsubsection{Root space decomposition} 
  Let $\fh$ be a Cartan subalgebra of $\fg$ and $\Phi$ be the corresponding root system. For ease of notation, we set
\[
\Phi^*:=\Phi \sqcup \{0\}.
\]
 We have then the root decomposition 
\[
\fg=\bigoplus_{\alpha\in \Phi^*} \fg^\alpha,
\]
where $\fg^\alpha$ is the eigenspace for root $\alpha\in \Phi$ and  $\fg^0=\fh$. 

In what follows, we write $x^\alpha$ for an element of $\fg^\alpha$ and  $x_n^\alpha$ for the element $x^\alpha \otimes t^n\in \fg^\alpha\llp t \rrp$. Following the usual abuse of notation, we also  write $x_n^\alpha$ for the corresponding element of $\tU_\kappa(\hfg)$.

\subsubsection{Root functions}
Now let 
\[
r:\Phi^*\ra \bZ
\]
 be a function. Define a subspace 
\[
\fk=\fk_r:=\bigoplus_{\alpha\in \Phi^*} t^{r(\alpha)} \fg^\alpha\llb t \rrb\subset \fgpp.
\]
 We assume that $r$ satisfies the following properties 
 \begin{equation} \label{eq:assumption0}
 \begin{matrix} 
 r(\alpha)\geq 0 & &\forall\, \alpha \\
 r(\alpha)+r(\beta)\geq r(\alpha+\beta) & & \forall \,\alpha, \beta.
\end{matrix} 
\end{equation} 
This ensures that $\fk_r$ is a subalgebra of $\hfg$ (in fact a subalgebra of $\fg[\![t]\!]$) and the central extension is split over $\fk_r$. For technical reasons, we also assume that 
$r(0)>0$. (These assumptions are satisfied in the case of interest to us). 

\sssec{Root modules} 
Define
 \begin{equation}\label{eq:induced}
 V_\fk:=\Ind_{\fk\oplus \bC.\bone}^{\hfg_\kappa}(\bC).
 \end{equation} 
 We call $V_\fk$ the \emph{root module} associated to the root subalgebra $\fk$. 
 It is clear  this module is smooth; i.e., $V_\fk\in \hfg_\kappa-\modd$.  One way to define this module is to say that $V_\fk$ is generated by a vector $v_0$ subject to the relations 
 \begin{equation}\label{eq:relations}
 \begin{matrix} 
 \bone.v_0 & = & v_0  & \\
 x_n^\alpha.v_0 & = & 0 &   \forall \alpha\in \Phi^*, \, \, \forall \, n\geq r(\alpha). 
 \end{matrix} 
 \end{equation}
 
 \begin{lem} \label{l:relations}
 Let $x=x_{n_1}^{\alpha_1} x_{n_2}^{\alpha_2}\dots x_{n_k}^{\alpha_k} \in \tU_\kappa(\hfg)$. 
 Suppose $\displaystyle \sum_{i=1}^k n_i \geq \sum_{i=1}^k r(\alpha_i)$. Then $x.v_0=0$. 
 \end{lem}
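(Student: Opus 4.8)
The plan is to prove Lemma \ref{l:relations} by induction on $k$, pushing annihilation operators to the right through the PBW-type reordering. The base case $k=1$ is precisely the defining relation \eqref{eq:relations}: if $n_1 \geq r(\alpha_1)$ then $x_{n_1}^{\alpha_1}.v_0 = 0$, and the hypothesis $n_1 \geq r(\alpha_1)$ is exactly what we are given when $k=1$. For the inductive step, I would look at the rightmost factor $x_{n_k}^{\alpha_k}$ acting on $v_0$. Either $n_k \geq r(\alpha_k)$, in which case $x_{n_k}^{\alpha_k}.v_0 = 0$ and we are done immediately; or $n_k < r(\alpha_k)$. In the latter case, I want to commute $x_{n_k}^{\alpha_k}$ to the left past $x_{n_{k-1}}^{\alpha_{k-1}}$, picking up a bracket term and (possibly) a central term from the cocycle \eqref{eq:2cocycle}.

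The commutation produces $x_{n_{k-1}}^{\alpha_{k-1}} x_{n_k}^{\alpha_k} = x_{n_k}^{\alpha_k} x_{n_{k-1}}^{\alpha_{k-1}} + [x^{\alpha_{k-1}}, x^{\alpha_k}]_{n_{k-1}+n_k} + (\textrm{central term})$, where the bracket $[x^{\alpha_{k-1}}, x^{\alpha_k}]$ lies in $\fg^{\alpha_{k-1}+\alpha_k}$ and the central term appears only when $\alpha_{k-1} + \alpha_k = 0$ and $n_{k-1}+n_k = 0$. Substituting this into $x$, I get a sum of two (or three) shorter-looking expressions applied to $v_0$: one with $x_{n_k}^{\alpha_k}$ moved one slot to the left (same number of factors, but now I can repeat the argument to move it further left, eventually reaching the leftmost position and then — no, wait, that does not immediately shorten things, so the real induction should be a double induction, on $k$ and on the position of the "bad" factor), and one genuinely shorter word of length $k-1$ in which two adjacent factors $x_{n_{k-1}}^{\alpha_{k-1}}, x_{n_k}^{\alpha_k}$ have been replaced by the single factor $[x^{\alpha_{k-1}}, x^{\alpha_k}]_{n_{k-1}+n_k}$. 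The crucial bookkeeping point is that the constraint is preserved: in the shorter word the total degree-sum $\sum n_i$ is unchanged (the new factor has degree $n_{k-1}+n_k$), while the total root-function sum changes from $\cdots + r(\alpha_{k-1}) + r(\alpha_k)$ to $\cdots + r(\alpha_{k-1}+\alpha_k)$, which by the subadditivity assumption \eqref{eq:assumption0} is $\leq$ the old sum; hence the hypothesis $\sum n_i \geq \sum r(\alpha_i)$ still holds for the shorter word, and the inductive hypothesis applies. The central term, when it occurs, is a scalar times a word of length $k-2$ whose degree-sum is $\sum_{i<k-1} n_i$ and whose root-function sum is $\sum_{i<k-1} r(\alpha_i) \leq \sum_{i \leq k} r(\alpha_i) - r(0)$ (using $\alpha_{k-1}+\alpha_k = 0$, so $r(\alpha_{k-1}) + r(\alpha_k) \geq r(0) > 0$); combined with $n_{k-1} + n_k = 0$ this again keeps the hypothesis valid, so that term vanishes too.

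To organize this cleanly I would set up a lexicographic double induction: the outer induction on $k$, and for fixed $k$ an inner induction on the smallest index $j$ such that the suffix $x_{n_j}^{\alpha_j}\cdots x_{n_k}^{\alpha_k}$ already "should" annihilate $v_0$ — or, more simply, an inner induction that moves the rightmost factor leftward. Concretely: given $x$ with $n_k < r(\alpha_k)$, commute $x_{n_k}^{\alpha_k}$ leftward one step at a time; each step either produces shorter words (length $k-1$ or $k-2$) satisfying the same hypothesis, which die by the outer inductive hypothesis, or leaves $x_{n_k}^{\alpha_k}$ one slot further left with the prefix's constraint-surplus only increased. After at most $k-1$ steps $x_{n_k}^{\alpha_k}$ sits in the leftmost slot: $x_{n_k}^{\alpha_k} x_{n_1}^{\alpha_1}\cdots$, and then acting on $v_0$ the remaining $(k-1)$-letter word $x_{n_1}^{\alpha_1}\cdots x_{n_{k-1}}^{\alpha_{k-1}}.v_0$ — but this has degree-sum $\sum_{i=1}^{k-1} n_i = (\sum_{i=1}^k n_i) - n_k > (\sum_{i=1}^k r(\alpha_i)) - r(\alpha_k) = \sum_{i=1}^{k-1} r(\alpha_i)$, using $n_k < r(\alpha_k)$ — so by the outer inductive hypothesis it is already zero, hence $x_{n_k}^{\alpha_k}$ acts on zero and the whole term vanishes. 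Thus every term in the expansion is zero, giving $x.v_0 = 0$.

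The main obstacle, and the only place needing genuine care, is the bookkeeping of the inequality under the two kinds of substitutions, together with making sure the central (cocycle) terms are correctly accounted for — in particular invoking $r(0) > 0$ exactly once, to handle the central term. There is also a mild subtlety that the "word" $x_{n_1}^{\alpha_1}\cdots x_{n_k}^{\alpha_k}$ need not be PBW-ordered, but since I never need a PBW basis — I only ever push one designated factor to the left and expand brackets — this is not actually an issue; I should just be careful to state that the identity being proved is about arbitrary such products in $\tU_\kappa(\hfg)$, and that the commutation relations used are the defining relations of $\hfg_\kappa$ from \eqref{eq:central}--\eqref{eq:2cocycle}. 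Everything else is routine.
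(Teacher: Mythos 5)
Your proof is correct and takes the same route the paper does (the paper's own ``proof'' is just a pointer to \cite[\S 3.3]{CK} with the hint that the lemma follows by induction using the commutation relations in $\hfg_\kappa$): a double induction on the length $k$ and on the position of the designated rightmost factor, with subadditivity of $r$ from \eqref{eq:assumption0} controlling the bracket terms and $r(0)>0$ controlling the cocycle terms. Your bookkeeping of the inequality under both kinds of substitution is exactly right, so the write-up simply supplies the details the paper delegates to the reference.
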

 
 \begin{proof} This is established by induction using commutation relations in $\hfg_\kappa$. We refer the reader to \cite[\S 3.3]{CK} for the proof. 
 \end{proof} 
 
Our aim is to study the action of $\cZ_c(\hfg)$ on root modules. To obtain a description of $\cZ_c(\hfg)$, we need to take a detour through the theory of vertex algebras. We shall see that the above lemma allows us to obtain estimates on the action of the centre on root modules.

 \subsection{Affine vertex algebras} \label{ss:vertex}
 \sssec{Recollections on vertex algebras} 
 A vertex algebra is a vector space equipped with a triple $(Y, T, \bv)$ where $\bv\in V$ is a fixed vector, called the \emph{vacuum}, $T:V\ra V$ is a linear operator, called the \emph{translation}, and $Y$ is a  map 
  \[
 Y: V\ra \End\, V \llb z,z^{-1}\rrb.
 \]
 These data must satisfy some axioms, cf. \cite{FBz}.
Elements of $V$ are called \emph{states} and those of $\End\, V \llb z,z^{-1}\rrb$ are called \emph{fields}. Thus, $Y$ is known as  the \emph{states-fields correspondence}.   
 For $A\in V$, we write 
 \[
 Y(A)=\sum_{n\in \bZ} A_{(n)} z^{-n-1}. 
 \]
The operators $A_{(n)}\in \End \, V$ are known as the \emph{Fourier coefficients} of $A$. We mention some properties of the Fourier coefficients. Depending on the approach, some of these are taken as a part of the defining axioms. 

First of all, for $A,B\in V$, we must have $A_{(n)}B=0$ for $n>\!\!>0$. Furthermore, we must have the following equalities in $\End\, V$
\begin{equation} \label{eq:commutator}
[A_n, B_m]=\sum_{n\geq 0} {m \choose n} (A_{(n)}B)_{m+n-k}.
\end{equation} 
and 
\begin{equation} \label{eq:translation}
(TA)_{(n)}=-nA_{(n-1)}.
\end{equation} 

\sssec{Lie algebra of Fourier coefficients} 
Identity \eqref{eq:commutator} implies that the span in $\End \, V$ of all the Fourier coefficient  is a Lie subalgebra.  In fact, more is true, cf. \cite[\S 4]{FBz}. Namely, one has a Lie algebra structure on the ``abstract vector space'' $U'(V)$ spanned by the Fourier coefficients $A_{(n)}$, $n\in \bZ$, subject to the relation \eqref{eq:translation}. 
We let $A_{[n]}$ denote the image of $A_{(n)}$ in $U'(V)$. By an abuse of notation, we also call $A_{[n]}$ the $n^\mathrm{th}$ Fourier coefficient of $A$. We will also consider the Lie algebra $U(V)$ spanned by infinite linear combinations of $A_{[n]}$'s subject to the relation \eqref{eq:translation}. Thus, $U(V)$ is a certain completion $U'(V)$.

 \sssec{Affine vertex algebra} 
 In this paper, we will be concerned with the vertex algebra associated to the affine Kac-Moody algebra $\hfg_\kappa$.
Let
 \[
 V_\kappa(\hfg):=\Ind_{\fgbb\oplus \bC.\bone}^{\hfg_\kappa}(\bC). 
 \]
 Then $V_\kappa(\hfg)$ carries the structure of a vertex algebra, known as the \emph{affine vertex algebra at level $\kappa$}.  Note $V_\kappa(\hfg)$ is generated by a vector $v_0$ subject to the relation 
 \[
 \fgbb.v_0=0.
 \]
  The operator $T$ is specified by the following requirements
 \[
 T(v_0)=0, \quad \quad [T,x_n]=-nx_{n-1}, \quad x\in \fg.
 \]
   Note that the vector space $V_\kappa(\hfg)$ is spanned by elements of the form $x_{n_1}^{\alpha_1}\dots x_{n_k}^{\alpha_k}$ where $n_i<0$.
 The Fourier coefficients of  $x_{-1}^\alpha$ are easy to describe; namely,  
 \[
(x_{-1}^\alpha)_{(s)}:= x_{s}^\alpha,\quad \quad \forall s\in \bZ.
 \]
In fact, the Reconstruction Theorem (cf. \cite{FBz}) guarantees that the above formula determines the fields associated to \emph{every} state in our vector space.  For our purposes, it  suffices to know that for every $n<0$, the Fourier coefficient $(x_{n}^\alpha)_{(s)}$ is a constant multiple of  $x_{s+n+1}^{\alpha}$. More generally,  let $X=x_{n_1}^{\alpha_1}\dots x_{n_k}^{\alpha_k}$. Then, $X_{(s)}$ is a linear combination of elements of the form 
\begin{equation}\label{eq:Fourier} 
(x_{n_{\sigma(1)}}^{\alpha_{\sigma(1)}})_{(s_1)} \dots (x_{n_{\sigma(k)}}^{\alpha_{\sigma(k)}})_{(s_k)}=
x_{n_{\sigma(1)}+s_1+1}^{\alpha_{\sigma(1)}} \dots x_{n_{\sigma(k)}+s_k+1}^{\alpha_{\sigma(k)}}
\end{equation} 
 where $\sigma$ ranges over automorphisms of $\{1,2,\dots, k\}$ and $s_i$'s are integers satisfying $\displaystyle \sum_{i=1} ^k s_i = s$. Thus, we also have a similar description of $X_{[s]}$ in terms of the Fourier coefficients $(x_{n}^\alpha)_{[r]}$.

 \sssec{Action of Fourier coefficients} 
One can show that  the map $(x_{-1}^\alpha)_{[r]} \ra x_r^\alpha$ extends to an \emph{injective Lie algebra homomorphism} 
\begin{equation}\label{eq:embedding} 
U(V_\kappa(\hfg))\inj \tU_\kappa(\hfg).
\end{equation}
In particular, we have an action of Fourier coefficients $X_{[n]}\in U(V_\kappa(\hfg))$ on smooth $\hfg_\kappa$-modules. The key fact we need is the following vanishing result.

 \begin{prop} \label{p:estimate} 
 Let $r$ be a function $r:\Phi^*\ra \bZ$ satisfying \eqref{eq:assumption0} and $r(0)>0$. Let $v_0$ be a vector satisfying   \eqref{eq:relations}. Then 
 \[
 (x_{n_1}^{\alpha_1}\dots x_{n_k}^{\alpha_k})_{[s]}.v_0=0,\quad \quad \displaystyle s\geq \sum_{i=1}^k r(\alpha_i) -\sum_{i=1}^k n_i-k.
 \]
 \end{prop}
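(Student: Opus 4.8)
The plan is to reduce the statement about the Fourier coefficient $(x_{n_1}^{\alpha_1}\dots x_{n_k}^{\alpha_k})_{[s]}$ to the monomial vanishing already supplied by Lemma~\ref{l:relations}. Recall from the discussion around \eqref{eq:Fourier} that $X_{[s]}$, for $X=x_{n_1}^{\alpha_1}\dots x_{n_k}^{\alpha_k}$, is an (infinite, but locally finite on any smooth module) linear combination of monomials of the form
\[
x_{n_{\sigma(1)}+s_1+1}^{\alpha_{\sigma(1)}}\dots x_{n_{\sigma(k)}+s_k+1}^{\alpha_{\sigma(k)}},
\]
where $\sigma$ is a permutation of $\{1,\dots,k\}$ and the integers $s_i$ satisfy $\sum_{i=1}^k s_i = s$. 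So it suffices to show that each such monomial annihilates $v_0$ whenever $s\geq \sum_i r(\alpha_i)-\sum_i n_i-k$.

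For a fixed such monomial, apply Lemma~\ref{l:relations} with the data $(\alpha_{\sigma(i)}, n_{\sigma(i)}+s_i+1)$. The lemma tells us the monomial kills $v_0$ provided
\[
\sum_{i=1}^k \bigl(n_{\sigma(i)}+s_i+1\bigr)\ \geq\ \sum_{i=1}^k r(\alpha_{\sigma(i)}).
\]
Since $\sigma$ is a permutation, $\sum_i n_{\sigma(i)}=\sum_i n_i$ and $\sum_i r(\alpha_{\sigma(i)})=\sum_i r(\alpha_i)$, and $\sum_i s_i=s$, so the inequality becomes $\sum_i n_i + s + k \geq \sum_i r(\alpha_i)$, i.e. exactly $s\geq \sum_i r(\alpha_i)-\sum_i n_i-k$, which is our hypothesis. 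Hence every monomial occurring in $X_{[s]}$ annihilates $v_0$, and therefore so does $X_{[s]}$.

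A couple of small points need care, and these are where the only real work lies. First, one should make sure that the expansion \eqref{eq:Fourier} is legitimate as an identity of operators acting on $v_0$: the coefficient $X_{[s]}$ is a priori an infinite sum of Fourier coefficients, but since $V_\fk$ is smooth (as noted after \eqref{eq:induced}) and $\cZ_c(\hfg)$-type operators act through $\tU_\kappa(\hfg)$ via the embedding \eqref{eq:embedding}, only finitely many terms act nontrivially on $v_0$, so the term-by-term argument is valid. Second, one must confirm that the bound in Lemma~\ref{l:relations} really does apply term-by-term and does not require the original ordering of the factors — but this is immediate since the lemma is stated for an arbitrary ordered product and we are simply feeding it the permuted indices. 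I do not expect a genuine obstacle here; the proposition is essentially a bookkeeping consequence of Lemma~\ref{l:relations} combined with the structure of Fourier coefficients in the affine vertex algebra, and the main thing to get right is tracking the shift ``$+1$'' per factor that turns $\sum n_i$ into $\sum(n_i+s_i+1)=\sum n_i+s+k$.
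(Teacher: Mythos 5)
Your proof is correct and follows exactly the same route as the paper's: expand $(x_{n_1}^{\alpha_1}\dots x_{n_k}^{\alpha_k})_{[s]}$ via \eqref{eq:embedding} and \eqref{eq:Fourier} into monomials indexed by permutations $\sigma$ and tuples $(s_i)$ with $\sum_i s_i = s$, then apply Lemma~\ref{l:relations} to each, using that the hypothesis on $s$ is precisely $\sum_i(n_i+s_i+1)\geq\sum_i r(\alpha_i)$. The extra remarks you add about local finiteness of the sum on a smooth module and permutation-invariance of the relevant sums are correct refinements of details the paper leaves implicit.
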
 
 
 \begin{proof} The proof is an easy application of Lemma \ref{l:relations}.
 Suppose $s_i$'s are integers satisfying $\displaystyle \sum_{i=1} ^k s_i = s$. Then the assumption on $s$ implies that 
 \[
\displaystyle \sum_{i=1} ^k (n_i+ s_i+1) \geq \sum_{i=1}^k r(\alpha_i).
\]
 By Lemma \ref{l:relations}, every element of the form \eqref{eq:Fourier} annihilates $v_0$. Now the image of 
 $ (x_{n_1}^{\alpha_1}\dots x_{n_k}^{\alpha_k})_{[s]}$ under the morphism \eqref{eq:embedding} equals $(x_{n_1}^{\alpha_1}\dots x_{n_k}^{\alpha_k})_{(s)}$ which, in turn, is a linear combination of elements of the form \eqref{eq:Fourier}. As each of these terms annihilate $v_0$, so does $ (x_{n_1}^{\alpha_1}\dots x_{n_k}^{\alpha_k})_{[s]}$.
  \end{proof}

 \subsection{Centre of the affine vertex algebra} \label{ss:centreVertex}
 \sssec{Centre of vertex algebras} 
 Having discussed the basic structure of the affine vertex algebra, we are ready to study its centre. 
 The \emph{centre} of a vertex algebra $V$ is the commutative vertex subalgebra spanned by $B\in V$ such that $A_{(n)}.B=0$ for all $n\geq 0$ and all $A\in V$. The following simple observation will play an important role: if $B$ is in the centre of $V$, then Identity \eqref{eq:commutator} implies that $B_{[n]}$  is in the centre of $U'(V)$ and the centre of $U(V)$.

 Let $\fz_\kappa$ denote the centre of the vertex algebra $V_\kappa(\hfg)$. Our goal is to describe $\fz_\kappa$ and use this to shed light on the centre $\cZ_\kappa$ of the completed universal enveloping. As mentioned above, the centre is only interesting when $\kappa$ is critical. Henceforth, we denote the centre of the affine vertex algebra at the critical level by $\fz_c=\fz_c(\hfg)$.

 \sssec{Segal-Sugawara operators} 
Note that as a vector space $V_\kappa(\hfg)$ is isomorphic to the universal enveloping algebra $U(\fg_-)$, where 
\[
\fg_-:=\fg\otimes t^{-1}\bC\llb t^{-1}\rrb.
\]
 Given $S\in U(\fg_-)$, we write $\bar{S}$ for its image in the associated graded algebra 
 \[
 \gr(U(\fg_-)) \simeq S(\fg_-).
 \]
   Note that we have an embedding $\fg\inj \fg_-$ given by $x\mapsto x_{-1}=x\otimes t^{-1}$ which induces an embedding $S(\fg)\inj S(\fg_-)$.  The following definition is due to Chervov and Molev \cite{CM} following \cite{CT}. 

\begin{defe}\label{d:CM}
 A complete set of Segal-Sugawara vectors is a set of elements 
\[
S_1, S_2, \dots, S_{n} \in U(\fg_-), \quad n=\mathrm{rk} \fg,
\]
where $S_i\in \fz_c$ and $\bar{S_1}, \dots, \bar{S_n}$ coincide with the images of some algebraically independent generators of the algebra of invariants $S(\fg)^{\fg}$ under the imbedding $S(\fg)\inj S(\fg_-)$.
\end{defe} 
  
 Note that the elements $S_i$ are by no means unique. This is related to the fact  that there are many choices for generators of the   polynomials algebra $S(\fg)^\fg$.

  \sssec{Feigin-Frenkel Theorem} 
According to the Feigin-Frenkel Theorem \cite{FrenkelBook}, $\fz_c$ is a polynomial algebra in infinitely many variables; more precisely, if $S_1,\dots, S_n$ is a complete set of Segal-Sugawara operators, then  
\begin{equation} \label{eq:FFA1}
\fz_c \simeq \bC[T^r S_i \, | \, i=1, \dots, n, \, \, r\geq 0].
\end{equation}

\subsection{Harish-Chandra homomorphism} \label{s:HC}
Let $\fg$ be a simple finite dimensional Lie algebra. Consider the natural projection $U(\fg)\ra U(\fh)$. This is merely a morphism of vector spaces. Taking $\fh$-invariants, however, gives rise to a commutative diagram of algebra homomorphisms 
\[
\xymatrix{
U(\fg)^\fh \ar[r] &U(\fh)=\Sym(\fh)\\
\cZ(\fg) \ar@{^{(}->}[u] \ar[r]^\cong & \Sym(\fh)^W\ar@{^{(}->}[u].
}
\]
Here, $\Sym(\fh)^W$ denotes the space of  elements of $\Sym(\fh)$ invariant under the \emph{dotted} action of $W$.

We have a similar picture for affine vertex algebras.\footnote{I thank Alex Molev for explaining this to me.} Recall the notation  
\[
\fg^-:=t^{-1}\fg[t^{-1}],\quad \quad \fh^-:=t^{-1}\fh[t^{-1}].
\]
 Then, the canonical projection $U(\fg^-)\ra U(\fh^-)$ gives rise to a commutative diagram 
\begin{equation}\label{eq:commutative}
\xymatrix{
U(\fg^-)^\fh \ar[r] &U(\fh^-)=\Sym(\fh^-)\\
\fz_c \ar@{^{(}->}[u] \ar[r]^\cong & W(\cfg) \ar@{^{(}->}[u].
}
\end{equation} 
Here, $W(\cfg)$ denotes the \emph{classical $\mathcal{W}$-algebra} associated to the Langlands dual algebra $\cfg$. It is defined to be the kernel of certain operators known as \emph{screening operators} and can be considered as the affine analogue of $\Sym(\fh)^W$; see \cite[\S 8]{FrenkelBook} for more details. 

We refer to the algebra homomorphism 
\[
\cF: \fz_c \ra \Sym(\fh^-)
\]
as the \emph{Harish-Chandra homomorphism} for the affine vertex algebra $V(\hfg)$. By the above discussion, it is an injective morphism whose image identifies with the classical $\cW$-algebra associated to $\cfg$. In the language of connections (cf. the next section), the analogue of this homomorphism is also known as the \emph{Miura transformation}.

\subsection{Centre of the completed enveloping algebra} \label{ss:centreAffine}
Next, we turn our attention to the centre $\cZ_c$ of the completed enveloping algebra $\tU_c(\hfg)$ at the critical level. Equivalently, $\cZ_c$ is the (Bernstein) centre of the category of smooth critical level $\hfg_c$-modules.

 \sssec{Segal-Sugawara operators} 
 Let $S_1,\dots, S_n\in \fz_c$ be a complete set of Segal-Sugawara vectors (Definition \ref{d:CM}). The Fourier coefficients $S_{i,[n]}$ are known as \emph{Segal-Sugawara operators}. It follows from the definition that they belong to the centre of $U(V_{c}(\hfg))\subset \tU_c(\hfg)$. Using the Lie algebra homomorphism  \eqref{eq:embedding}, one can show that they are also central in $\tU_c(\hfg)$ \cite{FrenkelBook}. A variant of the above-mentioned theorem of Feigin and Frenkel states that 
 $\cZ_c$ is the completion of a polynomial algebra on infinitely many variables; more precisely, we have 
\begin{equation} \label{eq:FFA2}
\cZ_c \simeq \widetilde{\bC[S_{i,[n]}]_{i=1,\dots, \ell}^{n\in \bZ}}.
\end{equation} 
This completion is defined to be the inverse limit 
\[
\varprojlim_N \left({\bC[S_{i,[n]}]_{i=1,\dots, \ell}^{n\in \bZ}}/I_N\right),
\] where $I_N$ is the ideal generated by $S_{i,[m]}, n\geq mN$. We can think of elements of this completion as infinite linear combinations of $S_{i,[n]}$'s. 
 
  
  \sssec{Action on smooth modules}
We now consider the action of $\cZ_c$ on some smooth modules. 
 Let $V$ be a smooth $\hfg$-module at the critical level. Then, we have a natural homomorphism $\widetilde{\bC[S_{i,[n]}]_{i=1,\dots, \ell}^{n\in \bZ}}\ra \End_{\hfg}(V)$ defined as the composition 
 \[
 \widetilde{\bC[S_{i,[n]}]_{i=1,\dots, \ell}^{n\in \bZ}}\simeq \cZ_c \inj \tU_c(\hfg) \ra \End_{\hfg}(V).
 \]
 Thus, the Segal-Sugawara operators $S_{i,[n]}$ act on smooth critical level modules. 
 Using Proposition \ref{p:estimate}, one can infer some information about this action for some well-known root modules $V$. 

\sssec{Example: Congruence subalgebras} Let $m$ be a positive integer and suppose $r$ is the constant function defined by 
\[
r(\alpha)=m, \quad \quad \forall \, \alpha\in \Phi^*.
\] 
Then 
\[
\fk_r=t^m\fgbb
\]
 is a known as a \emph{congruence subalgebra}. In this case, Proposition \ref{p:estimate} implies that 
 \begin{equation} \label{eq:congruence}
 \textrm{$S_{i,[n]}$ acts trivially on $V_{\fk}$ for all $n\geq d_i.m$.}
 \end{equation} 
  Here $d_1,\dots, d_n$ are the degrees of the fundamental invariants of $\fg$. Statement \eqref{eq:congruence} is a theorem of Beilinson and Drinfeld \cite[\S 3.8]{BD}. 

\sssec{Example: Moy-Prasad subalgebras}
 The above example can be generalised as follows. Let $x$ be a point in the affine building of $G$, and let $r\in \bR_{\geq 0}$. Set 
\[
r(\alpha)=1-\lceil \alpha(x)-r\rceil.
\] 
Then, $\fk_r$ identifies with the Moy-Prasad subalgebra 
\[
\fk_r=\fg_{x,r^+}.
\]
 In this case, Proposition \ref{p:estimate} implies that 
 \begin{equation} 
 \textrm{$S_{i,[n]}$ act trivially on $V_{\fk}$ for all $n\geq (r+1)d_i$.} 
 \end{equation} 
 This result is proved in \cite{CK}. One recovers the previous example by choosing 
 \[
 x=0\quad \textrm{and} \quad r=(m-1).
 \]


\section{Centre at the critical level for $\hgl_n$} \label{s:gln}
 In \S \ref{ss:CM}, we recall Chervov, Molev and Talalaev construction of a complete set of Segal-Sugawara vectors for $\hgl_n$ \cite{CT}, \cite{CM}. In \S \ref{ss:actionCM}, we study the action of the corresponding Segal-Sugawara operators on root modules. In \S \ref{ss:geometric}, we give an alternative definition of these operators using opers. We will then combine these with the vanishing results obtained above to give a proof of the main theorem in \S \ref{ss:proof}.

 \subsection{Segal-Sugawara vectors for $\widehat{\gl_n}$} \label{ss:CM}
 \sssec{Affine Kac-Moody algebra associated to $\hgl_n$}
 In the previous section, we considered the affine Kac-Moody algebra associated to a finite dimensional simple Lie algebra. It is easy to adapt all the definitions and constructions to the case of $\gl_n$. 
 
 Recall that the Killing form for $\gl_n$ is defined by
  \[
 (X,Y)=2n\on{tr}(XY)-2 \on{tr} X \on{tr} Y. 
 \]
 Given a bilinear form $\kappa$ which is a multiple of the Killing form, we can define the affine Kac-Moody algebra $(\hgl_n)_\kappa$  as in \eqref{eq:central}. The critical level will again be when $\kappa$ equals $-\frac{1}{2}$ of the Killing form. The affine vertex algebras $V_\kappa(\hgl_n)$ and the completed universal enveloping algebras $\tU_\kappa(\hgl_n)$ are defined in analogous manner; see \cite{CM} for more details.

  \sssec{Chervov-Molev-Talalaev construction} Following \cite{CM}, we give an explicit construction of a complete set of Segal-Sugawara vectors
   \[
  S_1,S_2,\dots, S_n\in \fz_c(\hgl_n)\subset V_c(\hgl_n).
  \]
   For an arbitrary $n\times n$ matrix $A=[a_{ij}]$ with entries in a ring, its \emph{column determinant} $\cdet$  is defined by 
 \begin{equation}\label{eq:cdet}
 \cdet \, A := \sum_{\sigma} \on{sgn}\, \sigma . a_{\sigma(1)1} \dots a_{\sigma(n)n},
 \end{equation} 
 where the sum is over all permutations $\sigma$ of the set $\{1,2,\dots, n\}$.

 Let $e_{ij}$ be the standard basis elements of $\gl_n$. Let $e_{ij}[r]$ denote the element $e_{ij}\otimes t^r$ of the loop algebra $\gl_n\llp t\rrp$. We will also need the extended affine algebra $\hgl_n \oplus \bC.\tau$ where 
  \begin{equation}\label{eq:tau}
[\tau, e_{ij}[r]]=-re_{ij}[r-1], \quad \quad [\tau, \bone]=0.
\end{equation}
Let $E[-1]$ denote the matrix with $e_{ij}[-1]$ at $ij$ entry. 

\begin{defe} Define $S_1, \dots, S_n\in U(\fg^-)$ by
\begin{equation} \label{eq:CMSS}
\cdet(\tau+E[-1]) = 	\tau^n+\tau^{n-1}S_1 + \cdots + \tau S_{n-1} + S_n.
\end{equation}
\end{defe}

 The main theorem of \cite{CM} is that $S_1,\dots S_n$ is a complete set of Segal-Sugawara vectors for $\hgl_n$ (Definition \ref{d:CM}).  
 
 \sssec{Affine Harish-Chandra homomorphism and Chervov-Molev-Talalaev operators}
 We need to record a result of Chervov and Molev which describes the image of the $S_i$'s under the Harish-Chandra homomorphism 
 \[
 \cF: \fz_c(\hgl_n)\inj \Sym(\fh^-).
 \]
  Define $\omega_i\in \Sym(\fh^-)$ by the equation 
\begin{equation}\label{eq:omega}
 (\tau+E_{11}[-1])\dots (\tau+E_{nn}[-1])= \tau^n +  \tau^{n-1}\omega_1 + \cdots + \omega_n, 
\end{equation} 

Then, according to \cite[\S 6]{CM}, we have 
\begin{equation} \cF(S_i)=\omega_i, \quad \quad i\in \{1,\dots, n\}. \label{p:CM}
\end{equation}

\begin{exam} \label{e:gl2}
Suppose $n=2$. Then we have 
 \begin{multline*}
\cdet (\tau+E[-1]) = (\tau+e_{11}[-1])(\tau+e_{22}[-1]) - e_{12}[-1]e_{21}[-1]=\\
\tau^2+\tau e_{22}[-1] + e_{11}[-1]\tau + e_{11}[-1]e_{22}[-1] - e_{12}[-1]e_{21}[-1]=\\ 
\tau^2 + (E_{22}[-1]+E_{11}[-1]) \tau -e_{11}[-2] + e_{11}[-1]e_{22}[-1] - e_{12}[-1]e_{21}[-1],
 \end{multline*}
 where the last equality is obtained by using the equality $E_{11}[-1]\tau=\tau E_{11}[-1] - e_{11}[-2]$.
 Thus, we have 
\[
 \begin{matrix}
S_1 & =  & E_{11}[-1]+E_{22}[-1]\\
S_2 & =  &  -e_{11}[-2] + e_{11}[-1]e_{22}[-1] - e_{12}[-1]e_{21}[-1].
\end{matrix} 
\]
Furthermore, in this case,
\[
\begin{matrix} 
\omega_1 & = & E_{11}[-1]+E_{22}[-1]\\
\omega_2 & = & -e_{11}[-2] + e_{11}[-1]e_{22}[-1].
\end{matrix} 
 \]
 \end{exam}

\ssec{Action of Chervov-Molev-Talalaev operators on root modules} \label{ss:actionCM}
Our goal is to study the action of the Segal-Sugawara operators defined by Chervov and Molev and Talalaev on certain root modules for $\hgl_n$. We begin by recording a property of Chervov-Molev-Talalaev operators.  

\sssec{A property of Chervov-Molev-Talalaev operators}
Let $\ell\in \{1,\dots, n\}$ and write the vector $S_\ell\in U(\fg_-)$ as a linear combination of elements of the form  
\[
 e_{i_1j_1}[u_1]\dots e_{i_kj_k}[u_k]
 \]
 where $u_i$'s are negative integers whose sum equals $-\ell$.

 \begin{lem} \label{l:cdet}
 At most one element of the set $\{i_1,\dots, i_k\}$ equals $n$. 
 \end{lem}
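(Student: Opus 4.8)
The plan is to analyze the combinatorial structure of the column determinant $\cdet(\tau + E[-1])$ directly, tracking which monomials can contain a factor $e_{nj}[u]$ with first index equal to $n$. Recall from \eqref{eq:CMSS} that $S_\ell$ is, up to reordering, the coefficient of $\tau^{n-\ell}$ in
\[
\cdet(\tau+E[-1]) = \sum_\sigma \on{sgn}\,\sigma\,(\tau\delta_{\sigma(1)1}+e_{\sigma(1)1}[-1])\cdots(\tau\delta_{\sigma(n)n}+e_{\sigma(n)n}[-1]),
\]
so each monomial of $S_\ell$ before commuting the $\tau$'s past the matrix entries is obtained by choosing a permutation $\sigma$, choosing for each column $j$ either the ``$\tau$'' term (only allowed when $\sigma(j)=j$) or the ``$e_{\sigma(j)j}[-1]$'' term, and then rewriting using \eqref{eq:tau} so that all $\tau$'s are collected on the left. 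First I would observe that an entry $e_{ij}[u]$ with $i=n$ arises in two ways: either directly as a chosen factor $e_{\sigma(j)j}[-1]$ with $\sigma(j)=n$ (so $u=-1$), or indirectly, when a $\tau$ is commuted past a factor $e_{nn}[-1]$ coming from the diagonal in row/column $n$, via $\tau\,e_{nn}[-1] = e_{nn}[-1]\tau - e_{nn}[-2]$ and iterations thereof, producing $e_{nn}[u]$ with $u\le -1$.

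Next I would bound the number of such factors by a parity/counting argument on the permutation $\sigma$. The key point is that a factor with first index $n$ occurs precisely when the column $j$ in question has $\sigma(j) = n$, i.e. there is exactly one column $j_0$ with $\sigma(j_0) = n$. In the direct case, that single column contributes the single factor $e_{nj_0}[-1]$. In the indirect case, the offending column must be column $n$ itself with $\sigma(n)=n$ (so that the $\tau$-term $\tau\,\delta_{nn}$ can be selected), and commuting it leftward past the other chosen factors only ever turns \emph{that one} $\tau$ into at most one additional entry $e_{nn}[u]$ of the form appearing in a given monomial; more precisely, expanding $\tau^{a}e_{nn}[-1]$ by \eqref{eq:tau} repeatedly yields a sum of monomials each of which contains \emph{at most one} entry $e_{nn}[\le -1]$ (the others having been absorbed back into powers of $\tau$ or having first index $\ne n$). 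Since in any fixed permutation $\sigma$ there is a unique column sent to $n$, the two mechanisms cannot both occur in the same monomial, and each contributes at most one factor with first index $n$. Hence every monomial of $S_\ell$ has at most one factor $e_{i_rj_r}[u_r]$ with $i_r = n$.

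The main obstacle I anticipate is bookkeeping in the indirect case: making rigorous the claim that commuting a string of $\tau$'s (produced by several diagonal $\tau$-choices in other columns) past the single factor $e_{nn}[-1]$, and simultaneously past all the other $e$-factors, never splits off two distinct entries with first index $n$ in the \emph{same} resulting monomial. The clean way to handle this is to note that only $\tau$ commuted past $e_{nn}[-1]$ produces a new first-index-$n$ entry (commuting $\tau$ past $e_{ij}[-1]$ with $i\ne n$ gives $e_{ij}[\le -1]$, still first index $\ne n$, or leaves a $\tau$), and that in the column-determinant expansion of an $n\times n$ matrix there is exactly one diagonal entry in row $n$, so at most one $e_{nn}[-1]$ factor is ever present to begin with; a power $\tau^a$ acting on a single $e_{nn}[-1]$ expands, by induction on $a$ using \eqref{eq:tau}, into $\sum_{b=0}^{a}\binom{a}{b}(-1)^b\,b!\,\big(\text{stuff}\big)\,e_{nn}[-1-b]\,\tau^{a-b}$-type terms — a sum in which each summand still has a single $e_{nn}[\cdot]$. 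I would present this induction as the one genuine computation, keeping the rest at the level of the structural observations above, and conclude that at most one of $i_1,\dots,i_k$ equals $n$, as claimed. $\square$
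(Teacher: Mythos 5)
Your proof is correct and follows essentially the same route as the paper's: expand the column determinant over permutations $\sigma$, observe that exactly one column is sent to row $n$, and use that the commutation rule \eqref{eq:tau} only shifts the degree of an $e_{ij}[\cdot]$ factor without changing the indices $i,j$ or duplicating the factor within any single resulting monomial. Your ``indirect mechanism'' is not really a second source of first-index-$n$ entries --- the commutator term $-e_{nn}[-2]$ merely replaces the already-counted factor $e_{nn}[-1]$ in a different summand, as your final induction in effect confirms --- so the extra case distinction could be dropped.
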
 
 
 \begin{proof} According to \eqref{eq:cdet}, we have 
 \[
 \cdet(\tau+E[-1])=\sum_{\sigma } \on{sgn} \sigma . A_\sigma, \quad \quad A_\sigma=a_{\sigma(1)1} \dots a_{\sigma(n)n}
 \]
  where $a_{ij}=\delta_{i,j}\tau+e_{ii}[-1]$. Obviously, for every $\sigma$, there is only one $i$ with $\sigma(i)=n$. Thus, $A_\sigma$ has exactly one term $a_{ij}$ with $i=n$. According to \eqref{eq:tau}, gathering all the $\tau$'s in $A_\sigma$ to the left preserve elements  of the form $e_{ij}$. The lemma is, therefore, established. \end{proof}

\sssec{Root subalgebra} 
Recall the definition \eqref{eq:pm} of the subalgebra $\fk_m\subset \gl_n(\cO)$ from the introduction. Let $\fk_m^\circ$ denote the root subalgebra of $\hfg$ defined by the function 
\begin{equation}\label{eq:fp0}
r(\alpha_{ij}) = 
\begin{cases} m & i=n \\
0 & j=n, i\neq n\\
1 & \textrm{otherwise} 
\end{cases} 
\end{equation} 

Explicitly, we have 
\begin{equation}\label{eq:km0}
\fk_m^0:=
\begin{bmatrix} 
\cP & \cdots & \cP & \cO  \\
\vdots & \ddots  &  \vdots & \vdots \\
\cP & \cdots & \cP & \cO \\
\cP^m & \cdots &  \cP^m & \cP^m
\end{bmatrix}.
\end{equation}

Note that $\fk_m^0$ is a pro-nilpotent subalgebra of $\fk_m$ and we have $\fk_m/\fk_m^0\simeq \gl_{n-1}(\cO)$; hence, $\fk_m^0$ is the pro-nilpotent radical of $\fk_m$. (We shall not need the last fact).

 \sssec{Action on the corresponding root module} 
 \begin{prop} \label{p:mainEstimate} 
 Let $v$ be a vector in a $\hfg_\kappa$-module annihilated by $\fk_m^\circ$.  Let $S_1,\dots, S_n$ be the  Segal-Sugawara vectors defined above. Then, we have:
 \[
 S_{\ell,[N]}.v=0,\quad \quad  \forall \, \, N\geq m+\ell-1.
 \]
 \end{prop}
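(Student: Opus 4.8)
The plan is to reduce the statement to the vanishing result of Proposition~\ref{p:estimate} by carefully bookkeeping the ``weight'' of each monomial appearing in $S_\ell$. Recall that Proposition~\ref{p:estimate} says that for a vector $v$ annihilated by $\fk_r^0$, a Fourier coefficient $(x_{n_1}^{\alpha_1}\cdots x_{n_k}^{\alpha_k})_{[s]}$ kills $v$ once $s\geq \sum_i r(\alpha_i) - \sum_i n_i - k$. So for each monomial $e_{i_1j_1}[u_1]\cdots e_{i_kj_k}[u_k]$ occurring in $S_\ell$ (with all $u_p<0$ and $\sum_p u_p=-\ell$ by the grading), I need to show that the quantity
\[
\sum_{p=1}^k r(\alpha_{i_pj_p}) - \sum_{p=1}^k u_p - k \;=\; \sum_{p=1}^k r(\alpha_{i_pj_p}) + \ell - k
\]
is at most $m+\ell-1$, i.e. that $\sum_{p=1}^k r(\alpha_{i_pj_p}) \leq m+k-1$. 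Combined with the description in \S\ref{ss:vertex} of $X_{[s]}$ as a linear combination of terms of the form \eqref{eq:Fourier}, this will give $S_{\ell,[N]}.v=0$ for all $N\geq m+\ell-1$.

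First I would examine the structure of the monomials of $S_\ell$ coming from $\cdet(\tau+E[-1])$. By Example~\ref{e:gl2} and the definition \eqref{eq:CMSS}, each such monomial is obtained from a product $a_{\sigma(1)1}\cdots a_{\sigma(n)n}$ with $a_{ij}=\delta_{ij}\tau + e_{ii}[-1]$ by selecting the $\tau$-free entries, moving all the $\tau$'s to the left, and picking up lower-order corrections via \eqref{eq:tau} (which only shift the $t$-power, never change the matrix indices $e_{ii}$). Two features are crucial. First, every matrix factor that appears is \emph{diagonal}: it is of the form $e_{ii}[u]$. Second, by Lemma~\ref{l:cdet}, at most one of the indices $i_1,\dots,i_k$ equals $n$. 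Now consult the root function \eqref{eq:fp0}: on a diagonal generator $e_{ii}$ (so the root is $\alpha_{ii}=0$) we have $r(0)=m$ if $i=n$ and $r(0)=1$ if $i\neq n$ --- wait, that is not quite what \eqref{eq:fp0} literally says for the zero root, so here I would pause and check the convention: for diagonal entries the relevant value is $r(\alpha_{ii})$, which by the $i=n$ clause equals $m$ when $i=n$ and otherwise, by the last clause, equals $1$. Hence for a monomial $e_{i_1i_1}[u_1]\cdots e_{i_ki_k}[u_k]$ with at most one $i_p$ equal to $n$,
\[
\sum_{p=1}^k r(\alpha_{i_pi_p}) \;\leq\; m + (k-1)\cdot 1 \;=\; m+k-1,
\]
which is exactly the bound needed.

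Assembling the pieces: fix $N\geq m+\ell-1$. Write $S_\ell$ as a sum of diagonal monomials $X=e_{i_1i_1}[u_1]\cdots e_{i_ki_k}[u_k]$ with $\sum u_p=-\ell$ and at most one $i_p=n$. For each such $X$, the Fourier coefficient $X_{[N]}$ is a linear combination of products $e_{i_{\sigma(1)}i_{\sigma(1)}}[u_{\sigma(1)}+s_1+1]\cdots e_{i_{\sigma(k)}i_{\sigma(k)}}[u_{\sigma(k)}+s_k+1]$ with $\sum s_p=N$, and the bound above together with $N\geq m+\ell-1\geq \sum_p r(\alpha_{i_pi_p}) + \ell - k$ lets Lemma~\ref{l:relations} (applied with $r=$ the function \eqref{eq:fp0}, legitimate since $v$ is annihilated by $\fk_m^0=\fk_r^0$ with $r(0)=m>0$) kill each such product against $v$. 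Summing over the monomials of $S_\ell$ gives $S_{\ell,[N]}.v=0$. The only genuinely delicate point is the reliance on Lemma~\ref{l:cdet} and the diagonality of the factors --- without the ``at most one index equals $n$'' bound, a long product of $e_{nn}$'s would blow up the estimate --- so I expect verifying that every monomial of the column determinant genuinely has this shape (including the correction terms from commuting $\tau$ past $E_{ii}[-1]$) to be the main thing to get right; this is precisely the content of Lemma~\ref{l:cdet}, so I would lean on it directly.
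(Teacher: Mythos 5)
Your reduction to Proposition~\ref{p:estimate} via the bound $\sum_p r(\alpha_{i_pj_p})\leq m+k-1$ is exactly the paper's proof, and your final estimate is the right one. There is, however, one factual error in your description of the monomials of $S_\ell$: they are \emph{not} all diagonal. The matrix $\tau+E[-1]$ has off-diagonal entries $e_{ij}[-1]$, $i\neq j$, which enter the column determinant through the non-identity permutations $\sigma$ --- already for $n=2$, Example~\ref{e:gl2} gives $S_2=-e_{11}[-2]+e_{11}[-1]e_{22}[-1]-e_{12}[-1]e_{21}[-1]$, whose last monomial is purely off-diagonal. (You may have been misled by the typo $a_{ij}=\delta_{i,j}\tau+e_{ii}[-1]$ in the paper's proof of Lemma~\ref{l:cdet}; the entry should be $\delta_{ij}\tau+e_{ij}[-1]$.) Lemma~\ref{l:cdet} does not assert diagonality; it only asserts that at most one \emph{row} index $i_p$ equals $n$. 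As literally written, your argument therefore disposes of only a proper subset of the monomials of $S_\ell$.

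Fortunately the estimate is insensitive to this: by \eqref{eq:fp0} one has $r(\alpha_{ij})\leq m$ when $i=n$ and $r(\alpha_{ij})\leq 1$ when $i\neq n$, for \emph{every} column index $j$ (the case $j=n$, $i\neq n$ even gives $0$). So Lemma~\ref{l:cdet} alone yields $\sum_p r(\alpha_{i_pj_p})\leq m+(k-1)$ for an arbitrary monomial, diagonal or not, and the rest of your argument --- the expansion of $X_{[N]}$ into terms of the form \eqref{eq:Fourier} and the appeal to Lemma~\ref{l:relations} --- goes through verbatim. With that one correction your proof coincides with the paper's.
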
 

\begin{proof} We know  that $S_{\ell, N}$ is a linear combination of elements of the form 
\begin{equation}\label{eq:comb}
(e_{i_1j_1}[u_1]\dots e_{i_kj_k}[u_k])_{[N]}
\end{equation}
where $k\leq \ell$ and $u_i$'s are negative integers whose sum equals $-\ell$. It is sufficient to show that every such element annihilates $v$.  

In view of definition of $r$ in \eqref{eq:fp0} and Lemma \ref{l:cdet}, we have that 
\[
r(\alpha_{i_1j_1}) + \cdots + r(\alpha_{i_kj_k}) \leq m+k-1
\]
Now if $N\geq m+\ell-1$, then we have 
\[
N\geq m+\ell-1 \geq m+k-1+\ell-k \geq (r(\alpha_{i_1j_1}) + \cdots + r(\alpha_{i_kj_k})) - (u_1+\cdots + u_k) - k.
\]
Thus, according to Proposition \ref{p:estimate}, elements of the form \eqref{eq:comb} annihilated $v$ for all such $N$. 
\end{proof}

\begin{rem}\label{r:false}
Lemma \ref{l:cdet} (and consequently Proposition \ref{p:mainEstimate}) may be false for other sets of Segal-Sugawara vectors. Indeed, if $n\geq 4$, then the above lemma is false for the Segal-Sugawara vectors denoted by $T_{i}$ in \cite{CM}.
\end{rem}

\subsection{Geometric description of the Chervov-Molev-Talalaev operators} \label{ss:geometric}
The isomorphisms \eqref{eq:FFA1} and \eqref{eq:FFA2} are \emph{algebraic} versions of the results of Feigin and Frenkel. We now discuss the geometric version of these results. 

\sssec{The centre in terms of opers}
We defined in the introduction the space of opers $\Op_n(\cDt)$ on the punctured disk. We can easily define a holomorphic analogue of this space; i.e., the space of opers on the disk $\cD$, denoted by $\Op_n(\cD)$. The elements of $\Op_n(\cD)$ are determined by an $n$-tuple 
\[
(a_1,\dots, a_n)\in \cO^n
\]
where $\cO=\bCbb$.

 As mentioned in the introduction, the geometric version of the Feigin-Frenkel Isomorphisms can be formulated as follows: 
 \begin{equation}\label{eq:FFGeometric} 
\fz_c(\hgl_n) \simeq \bC[\Op_n(\cD)],\quad \quad \cZ_c(\hgl_n) \simeq \bC[\Op_n(\cDt)].
\end{equation}

\sssec{Harish-Chandra homomorphism} 
Next, we discuss the geometric reformulation of the Harish-Chandra homomorphism $\cF: \fz_c\ra \Sym(\fh^-)$ . We identify $\fz_c$ with $\bC[\Op_n(\cO)]$ and $\Sym(\fh^-)=\bC[\fhbb]$, using Residue pairing. Thus, we wish to define a map $\cF: \bC[\Op_n(\cD)]\ra \bC[\fhbb]$, or alternatively, a morphism 
\[
\mu: \fhbb \ra \Op_n(\cD) 
\]
whose pullback at the level of algebra of functions equals $\cF$. 

For  
\[
h=(E_{11}(t), \dots, E_{nn}(t))\in \fhbb, \quad \quad E_{ii}(t)\in \bCbb
\]
we define
\begin{equation}
\mu(h):=(a_1,\dots, a_n)
\end{equation} 
where $a_i=a_i(t)\in \bCbb$ is specified by the equation 
\begin{equation} \label{eq:mu}
(\partial_t+E_{11}(t))\dots (\partial_t + E_{nn}(t)) = \partial_t^n + a_{1}(t)\partial_t^{n-1} + \cdots + a_n(t).
\end{equation} 
Here we use the usual commutation relation in the Weyl algebra
\begin{equation}\label{eq:commutation}
[\partial_t, t^n]=-n t^{n-1}
\end{equation} 
to move powers of $\partial_t$ to the right. We now $\mu$ show that the pullback of $\mu$ satisfied the required property. 

\sssec{} Write $a_i(t)=\sum_{k<0} a_{i,k}t^{-k-1}$. Then, for $i=1,\dots, n$ and negative integers $k$, we have functions 
\[
v_{i,k}: \Op_n(\cD)\ra \bC\quad \quad v_{i,k}(a_1,\dots, a_n)=a_{i,k}.
\]

\begin{lem} $\mu^*(v_{i,-1})=\omega_i$ for $i=1,\dots, n$.
\end{lem}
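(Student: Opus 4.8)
The plan is to prove the lemma by directly matching the two noncommutative product identities \eqref{eq:mu} and \eqref{eq:omega}, which define the Miura coefficients $a_i(t)$ and the Cartan elements $\omega_i$ respectively. First I would unwind the statement. By definition, $\mu^*(v_{i,-1})$ is the polynomial function on $\fhbb$ sending $h=(E_{11}(t),\dots,E_{nn}(t))$ to $v_{i,-1}(\mu(h))=a_{i,-1}$, and since $a_i(t)=\sum_{k<0}a_{i,k}t^{-k-1}$ this is simply the constant term $a_i(0)$ of the series determined by expanding $(\partial_t+E_{11}(t))\cdots(\partial_t+E_{nn}(t))$. On the other side, under the residue pairing used to define $\cF$, the linear coordinate $E_{jj}[-m]\in\fh^-\subset\Sym(\fh^-)$ is precisely the function $h\mapsto [t^{m-1}]E_{jj}(t)=\tfrac{1}{(m-1)!}E_{jj}^{(m-1)}(0)$; in particular $E_{jj}[-1]$ corresponds to $E_{jj}(0)$ and $E_{jj}[-2]$ to $E_{jj}'(0)$. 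So the lemma is the assertion that the polynomial $a_i(0)$ in the Taylor coefficients of the $E_{jj}(t)$ coincides with the polynomial defining $\omega_i$ in \eqref{eq:omega}.

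The core of the argument is that \eqref{eq:mu} and \eqref{eq:omega} are the same identity under the dictionary $\partial_t\leftrightarrow\tau$, $E_{jj}(t)\leftrightarrow E_{jj}[-1]$. I would make this precise via the algebra homomorphism $\Theta$ from the skew polynomial algebra $\Sym(\fh^-)\rtimes\bC[\tau]$ — with relation $[\tau,E_{jj}[-m]]=m\,E_{jj}[-m-1]$ coming from \eqref{eq:tau} — to the algebra of differential operators on the disc with coefficients in $\bC[\fhbb]$, determined by $\Theta(\tau)=\partial_t$ and $\Theta(E_{jj}[-1])=E_{jj}(t)$. The relation forces $\Theta(E_{jj}[-m])$ to be multiplication by $\tfrac{1}{(m-1)!}E_{jj}^{(m-1)}(t)$, and this is well defined: the operators $E_{jj}^{(m-1)}(t)$ are all multiplications, hence commute, and the commutator with $\partial_t$ produces the next derivative, so the factors $m$ work out. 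Since evaluation at $t=0$ of a multiplication operator is an algebra map and sends $\Theta(E_{jj}[-m])$ to exactly the residue-pairing value of $E_{jj}[-m]$, the composite ``$\Theta$ then $t\mapsto 0$'' is nothing but evaluation of functions in $\Sym(\fh^-)$ at the point $h$.

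Applying $\Theta$ to \eqref{eq:omega}, the left-hand side becomes (a reordering of) the Miura operator of \eqref{eq:mu}, whose expansion has $i$-th coefficient $a_i(t)$, while the right-hand side becomes $\partial_t^n+\cdots$ with $i$-th coefficient the multiplication operator $\Theta(\omega_i)$. Comparing the two expansions gives $\Theta(\omega_i)=a_i(t)$ as functions of $t$, and evaluating at $t=0$ yields $a_i(0)=\omega_i$ in $\bC[\fhbb]$, which is the lemma. Combined with \eqref{p:CM} and the injectivity of $\cF$, this also identifies $S_i$ with $v_{i,-1}$, which is the point of \S\ref{ss:geometric}. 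The case $n=2$ of Example \ref{e:gl2} is a convenient check.

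The one delicate point — really the only obstacle — is the bookkeeping of orderings and signs. The two products are normal-ordered on opposite sides of their coefficients (coefficient to the left of $\partial_t^{n-i}$ in \eqref{eq:mu}, to the right of $\tau^{n-i}$ in \eqref{eq:omega}), and the Weyl-algebra commutator \eqref{eq:commutation} carries a sign; consequently the dictionary above has to be implemented either by $\Theta$ or by the associated anti-homomorphism, with the order of the factors $(\partial_t+E_{jj}(t))$ reversed accordingly, so that the derivative correction produced by moving $\partial_t$ past $E_{jj}(t)$ in \eqref{eq:mu} decorates the same index as the mode correction $[\tau,E_{jj}[-1]]=E_{jj}[-2]$ in \eqref{eq:omega}. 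Once this is fixed consistently, the identity $\Theta(\omega_i)=a_i(t)$ is purely formal and can be verified coefficient by coefficient.
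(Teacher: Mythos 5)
Your strategy is the same as the paper's --- both proofs come down to matching \eqref{eq:mu} against \eqref{eq:omega} under the dictionary $\tau\leftrightarrow\partial_t$, $E_{jj}[-m]\leftrightarrow\tfrac{1}{(m-1)!}E_{jj}^{(m-1)}(t)$ and then taking the constant term, which under the residue pairing is evaluation of elements of $\Sym(\fh^-)$ at $h$ --- and your homomorphism $\Theta$ makes precise what the paper leaves implicit (the paper's reduction to ``only the $[-1]$ modes'' is too quick, since higher modes do contribute to $a_{i,-1}$ through the derivative corrections; the point is that they contribute identically on both sides, which is exactly what $\Theta$ records).

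The gap is the ``one delicate point'' you name but do not close, and it is not harmless. Applying $\Theta$ to \eqref{eq:omega} yields $(\partial_t+E_{11}(t))\cdots(\partial_t+E_{nn}(t))=\partial_t^n+\partial_t^{n-1}\Theta(\omega_1)+\cdots+\Theta(\omega_n)$, with coefficients to the \emph{right} of the powers of $\partial_t$, whereas \eqref{eq:mu} normal-orders them to the left; the two families of coefficients differ by derivative terms. For $n=2$ the left-ordered convention gives $a_2(t)=E_{22}'(t)+E_{11}(t)E_{22}(t)$, so $a_{2,-1}$ corresponds to $E_{22}[-2]+E_{11}[-1]E_{22}[-1]$, which is \emph{not} the element $\omega_2=-E_{11}[-2]+E_{11}[-1]E_{22}[-1]$ of Example \ref{e:gl2}; with the right-ordered convention one gets exactly $\omega_2$. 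Moreover the repair you float --- the anti-homomorphism with reversed factors --- does not work: it forces $\Theta(E_{jj}[-m])=\tfrac{(-1)^{m-1}}{(m-1)!}E_{jj}^{(m-1)}(t)$, whose value at $t=0$ no longer agrees with the residue-pairing identification, and it turns the left-hand side of \eqref{eq:omega} into the product in the opposite order. The correct resolution is simply to read \eqref{eq:mu} with the coefficients placed to the right of $\partial_t^{n-i}$, matching the normal ordering of \eqref{eq:omega}; then $\Theta(\omega_i)=a_i(t)$ holds on the nose and evaluation at $t=0$ gives the lemma. You should state this convention explicitly: as written, your argument (like the paper's own sketch, whose example following the lemma exhibits the same mismatch) establishes the identity only up to an unspecified ordering convention.
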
 

\begin{proof} By definition, $\mu^*(v_{i,-1})$ is the function on $\fhbb$ defined by  
\[
(E_{11}[t], \dots, E_{nn}(t)) \longmapsto a_{i,-1},
\]
 where  $a_i(t)=\sum_{k<0} a_{i,k}t^{-k-1}$ is defined by \eqref{eq:mu}. (Here, we are using the usual notation $E_{ii}[t]=\sum_{k<0} E_{ii}[-k-1]$.)
Since we are only interested in $a_{i,-1}$, it is clear that we can restrict ourselves to considering 
\[
(\partial+E_{11}[-1])\dots (\partial+E_{nn}[-1]).
\]
Indeed, the commutation relation \eqref{eq:commutation} ensures that the order of poles can only decrease when we convert the product on the LHS of \eqref{eq:mu} to the sum on the RHS. The result now follows from the definition of $\omega_i$'s given in \eqref{eq:omega}. 
\end{proof} 

\begin{exam} Suppose $n=2$. Let $h=(E_{11}(t), E_{22}(t))\in \fhbb$. The morphism $\mu: \fhbb\ra \Op_n(\cO)$ sends $h$ to the oper 
\[
(\partial_t+E_{11}(t))(\partial_t+E_{22}(t)) = \partial_t^2 + (E_{11}(t)+E_{22}(t)-E_{11}(t)')\partial_t + E_{11}(t) E_{22}(t).
\]
Thus, $\mu^*: \bC[\Op_n(\cO)]\ra \bC[\fhbb]$ satisfies 
\[
(\mu^*(v_{1,-1}))(h)= E_{11}[-1]+E_{22}[-1]-E_{11}[-2], \quad \quad (\mu^*(v_{2,-1}))(h) = E_{11}[-1]E_{22}[-1].
\]
In view of Example \ref{e:gl2}, if we identify $\bC[\fhbb]$ with $\Sym(\fh^-)$, we obtain 
\[
\mu^*(v_{1,-1}) = \omega_1, \quad \quad \mu^*(v_{2,-1})=\omega_2. 
\]
\end{exam}

\sssec{Chervov-Molev-Talalaev operators} 
The above discussion allows us to give an alternative description of Chervov and Molev's Segal-Sugawara vectors and the corresponding operators. 

\begin{lem} Under the isomorphism $\fz_c\simeq \bC[\Op_n(\cO)]$, the Chervov-Molev-Talalaev vector $S_i$ maps to $v_{i,-1}$. 
\end{lem}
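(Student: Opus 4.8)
The plan is to identify both sides of the claimed equality by comparing them after applying the (injective) Harish-Chandra homomorphism $\cF:\fz_c(\hgl_n)\inj\Sym(\fh^-)$. Since $\cF$ is injective, it suffices to show that $\cF(S_i)$ and $\cF(v_{i,-1})$ agree as elements of $\Sym(\fh^-)=\bC[\fhbb]$, where by $v_{i,-1}$ I mean the element of $\fz_c$ corresponding to the function $v_{i,-1}\in\bC[\Op_n(\cD)]$ under the geometric Feigin-Frenkel isomorphism \eqref{eq:FFGeometric}. On the $S_i$ side, equation \eqref{p:CM} of Chervov-Molev gives $\cF(S_i)=\omega_i$, where $\omega_i$ is defined by the factored product \eqref{eq:omega} in $\Sym(\fh^-)$ using the operator $\tau$ with $[\tau,E_{jj}[r]]=-rE_{jj}[r-1]$. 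On the $v_{i,-1}$ side, the content of the immediately preceding lemma is exactly that $\mu^*(v_{i,-1})=\omega_i$, where $\mu^*$ is the pullback of the Miura map $\mu:\fhbb\ra\Op_n(\cD)$, and $\mu^*$ is precisely the geometric incarnation of $\cF$ under the identifications $\fz_c\simeq\bC[\Op_n(\cD)]$ and $\Sym(\fh^-)=\bC[\fhbb]$ recalled in \S\ref{ss:geometric}.

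Concretely, the steps are: (i) recall from \S\ref{s:HC} and \S\ref{ss:geometric} that the diagram \eqref{eq:commutative} identifies the algebraic Harish-Chandra homomorphism $\cF:\fz_c\ra\Sym(\fh^-)$ with the geometric one $\mu^*:\bC[\Op_n(\cD)]\ra\bC[\fhbb]$, i.e. the square relating $\fz_c\simeq\bC[\Op_n(\cD)]$, $\Sym(\fh^-)\simeq\bC[\fhbb]$, $\cF$ and $\mu^*$ commutes — this is the statement that the Miura transformation is the Harish-Chandra/screening-operator map; (ii) apply $\cF$ to the element of $\fz_c$ corresponding to $v_{i,-1}$: by (i) this equals $\mu^*(v_{i,-1})$, which by the preceding lemma is $\omega_i$; (iii) apply $\cF$ to $S_i$: by \eqref{p:CM} this is also $\omega_i$; (iv) conclude $\cF(v_{i,-1})=\cF(S_i)$, hence $v_{i,-1}=S_i$ in $\fz_c$ by injectivity of $\cF$. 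One should also note that the two descriptions of $\omega_i$ — the one via $\tau$ in \eqref{eq:omega} and the one via $\partial_t$ in \eqref{eq:mu} restricted to the $t^{-1}$-coefficients — literally coincide, since $[\tau,E_{jj}[r]]=-rE_{jj}[r-1]$ matches $[\partial_t,t^{-r-1}]=-(r+1)t^{-r-2}$ under the residue identification $E_{jj}[-r-1]\leftrightarrow E_{jj}(t)$ contributing $t^{r}$; the worked $n=2$ example in the excerpt makes this matching explicit and can be cited as a sanity check.

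The main obstacle is step (i): making precise, with correct normalizations, that the algebraically-defined Harish-Chandra homomorphism $\cF$ on $\fz_c$ is intertwined with the Miura map $\mu^*$ under the geometric Feigin-Frenkel identifications. This is where one must be careful about the identification $\Sym(\fh^-)\simeq\bC[\fhbb]$ via the residue pairing, about the identification of the classical $\cW$-algebra $W(\cfg)$ (the image of $\cF$) with the image of $\mu^*$, and about the fact that for $\fg=\gl_n$ the Langlands dual is again $\gl_n$ so no dualization subtlety intervenes beyond bookkeeping. I would handle this by citing \cite[\S 8]{FrenkelBook} for the identification of the Miura transformation with the affine Harish-Chandra homomorphism, and the compatibility of the two Feigin-Frenkel pictures \eqref{eq:FFA1}--\eqref{eq:FFA2} versus \eqref{eq:FFGeometric}; everything else is the two lemmas already in hand plus injectivity of $\cF$. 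Once (i) is granted the proof is a two-line diagram chase.
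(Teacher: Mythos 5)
Your argument is exactly the paper's proof: both $S_i$ and $v_{i,-1}$ are shown to have image $\omega_i$ under the injective Harish-Chandra homomorphism $\cF\colon \fz_c\hookrightarrow\Sym(\fh^-)$ (via \eqref{p:CM} on one side and the preceding lemma $\mu^*(v_{i,-1})=\omega_i$ on the other), and injectivity gives the equality. The paper states this in one line, taking the identification of $\cF$ with $\mu^*$ for granted; your extra care about that identification is reasonable but not a different route.
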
 

\begin{proof} Indeed, the images of both $S_i$ and $v_{i,-1}$ under the map $\cF: \fz_c\simeq  \bC[\Op_n(\cO)] \inj \Sym(\fh^-)$ equals $\omega_i$. 
\end{proof} 

Now let us revisit the isomorphism $\cZ_c \simeq \bC[\Op_n(\cDt)]$. Let $v_{i,k}$ denote elements of $\bC[\Op_n(\cDt)]$ defined as above. Since we are working with \emph{meromorphic} (as oppose to holomorphic) differential equations, $k$ is allowed to be an arbitrary integer. 

\begin{cor}\label{c:opers}
 Under the isomorphism $\cZ_c \simeq \bC[\Op_n(\cDt)]$, $S_{i,[k]}$ maps to a scalar multiple of $v_{i,k}$. 
\end{cor}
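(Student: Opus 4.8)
The plan is to deduce Corollary \ref{c:opers} from the preceding Lemma (which identifies $S_i$ with $v_{i,-1}$ in $\bC[\Op_n(\cD)]$) by transporting that statement from the holomorphic disk $\cD$ to the punctured disk $\cDt$ via the translation operator $T$. The key point is that both sides of the asserted equality are Fourier coefficients obtained by applying $T$ repeatedly, and the Feigin--Frenkel description \eqref{eq:FFGeometric} is compatible with this operation. Concretely, on the vertex-algebra side $\fz_c$ carries the translation $T$, and by \eqref{eq:FFA1} the algebra $\cZ_c$ is generated by the Fourier coefficients $S_{i,[k]}$, which are (up to the relation \eqref{eq:translation}) the images of $T^r S_i$; on the geometric side $\bC[\Op_n(\cDt)]$ is likewise obtained from $\bC[\Op_n(\cD)]$ by adjoining the ``negative Fourier modes,'' and the translation $T$ corresponds to $-\partial_t$ acting on opers.

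First I would make precise how $T$ acts on the function $v_{i,-1}\in\bC[\Op_n(\cD)]$. Writing an oper as an $n$-tuple $(a_1,\dots,a_n)$ with $a_i(t)=\sum_{k<0}a_{i,k}t^{-k-1}$, the operator $T$ (i.e. $-\partial_t$, via \eqref{eq:commutation}) acts on the coordinate functions by a shift $v_{i,k}\mapsto c_{i,k}\,v_{i,k-1}$ for an explicit nonzero scalar $c_{i,k}$ coming from \eqref{eq:translation}: indeed $(T A)_{(n)} = -n A_{(n-1)}$, so iterating $T$ on $v_{i,-1}$ produces exactly scalar multiples of $v_{i,k}$ for $k<0$, sweeping out all the generators of $\bC[\Op_n(\cD)]$. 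Next I would invoke the already-proven Lemma $S_i\mapsto v_{i,-1}$ together with the fact that the Feigin--Frenkel isomorphism $\fz_c\simeq\bC[\Op_n(\cD)]$ intertwines the translation operators on both sides — this is part of the statement of the Feigin--Frenkel theorem as recalled in \cite{FrenkelBook} and is implicit in \eqref{eq:FFA1}. Applying $T^{r}$ to both sides then gives that $T^r S_i$ maps to a scalar multiple of $v_{i,-1-r}$.

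The final step is to pass from $\fz_c$ to $\cZ_c$. By \eqref{eq:FFA2} and \eqref{eq:FFGeometric}, $\cZ_c\simeq\bC[\Op_n(\cDt)]$, and under the embedding \eqref{eq:embedding} the Segal-Sugawara operator $S_{i,[k]}$ is the Fourier coefficient attached to the state $S_i$. Using the description of $X_{[s]}$ in terms of the $(x_n^\alpha)_{[r]}$ (the discussion around \eqref{eq:Fourier}) one sees that the assignment $A_{[k]}\leftrightarrow$ (the function extracting the $k$-th Laurent coefficient) is exactly the one sending $S_i=S_{i,[-1]}$ to $v_{i,-1}$ and, by the translation relation, $S_{i,[k]}$ to a scalar multiple of $v_{i,k}$ for every $k\in\bZ$ — for $k<0$ this is the content of the previous paragraph, while for $k\geq 0$ it is forced by the same translation/Fourier-coefficient bookkeeping extended to the meromorphic (loop) setting, where $k$ is no longer constrained to be negative. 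I expect the main obstacle to be bookkeeping rather than conceptual: one must check that the various scalars produced by \eqref{eq:translation} and by moving $\partial_t$'s to the right in \eqref{eq:mu} are all nonzero, so that ``scalar multiple'' can be asserted without the scalar vanishing, and that the completions appearing in \eqref{eq:FFA2} and in $\bC[\Op_n(\cDt)]$ match up. Both of these are routine given the explicit formulas, so the corollary follows.
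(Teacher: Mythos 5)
Your argument is correct and is essentially the paper's own proof: the paper likewise deduces the corollary from the preceding lemma ($S_i\mapsto v_{i,-1}$) together with the fact that the isomorphism $\cZ_c\simeq\bC[\Op_n(\cDt)]$ intertwines the translation operator $T$ with $-\partial_t$, citing \cite{FrenkelBook} (proof of Theorem 4.3.2) for the bookkeeping you spell out. The only point worth flagging is that for the later application one needs the scalars to be nonzero, which your remark about checking the constants from \eqref{eq:translation} already addresses.
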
 

\begin{proof} This follows from the fact that the isomorphism $\cZ_c\simeq \bC[\Op_n(\cDt)]$ intertwines the action of the operator $T$ (the translation operator of the affine vertex algebra $V_c(\hfg)$) with the operator 	$-\partial_t$. We refer the reader to \cite{FrenkelBook}, Proof of Theorem 4.3.2, for the details. 
\end{proof} 

\begin{rem} In \cite[\S 8.2.2]{FrenkelBook}, the Miura transformation is defined to be the map
$\fhbb \ra \Op_n(\cDt)$ specified by the formula 
\begin{equation}\label{eq:Miura2} 
\partial_t +h \mapsto \partial_t+p_{-1}+h,
\end{equation} where $p_{-1} = E_{2,1}+E_{3,2} + \cdots + E_{n,n-1}$. The equivalence of this definition with the one given above is proved in  \cite[\S 3.24]{DS}. See also \cite[\S 4]{FrenkelLecture}.
\end{rem}

\ssec{Proof of the main theorem}\label{ss:proof} As mentioned after the statement of the theorem, the case $m=0$ is well-known. So let us assume that $m>0$.  Suppose $\hfg_c-\modd_\chi$ has a $K_m$-equivariant object. To prove the theorem, it suffices to show that $\chi$ has irregularity less than $m$. 

\sssec{Reduction to a statement in classical representation theory} 
It is more convenient to work with $K_m^0$ since this is a prounipotent group. By assumption $V$  is a $(\hfg, \fk_m^0)$ Harish-Chandra module. As $\fk_m^0$ is pro-nilpotent, this happens if and only if we have a non-trivial homomorphism of $\hfg$-modules 
\[
V_m=V_{\fk_m^0} \ra V.
\]
Hence to prove the theorem, it suffices to show that if an oper $\chi$ acts nontrivially on $V_m$, then $\Irr(\chi)<m$.

\sssec{Action of Chervov-Molev-Talalaev operators} 
Let $S_1,\dots, S_n\in \fz_c(\hgl_n)$ denote the Segal-Sugawara operators defined by Chervov-Molev-Talalaev. Then, we have an isomorphism 
\[
\cZ_c(\hgl_n) \simeq \bC[S_{\ell, [N]}]_{\ell=1,\dots, n}^{N\in \bZ}. 
\]
According to Proposition  \ref{p:mainEstimate}, we have 
\[
S_{\ell,[N]}.v=0,\quad \quad  \forall \, \, N\geq m+\ell-1.
\]

\sssec{Reformulation in terms of opers} 
Let $v_{\ell, N}$ denote the function on $\Op_n(\cDt)$ corresponding to $S_{\ell,[N]}$ under the Feigin-Frenkel Isomorphism. Then, by Corollary \ref{c:opers}, we have  
\[
v_{\ell, [N]}.v=0,\quad \quad N\geq m+\ell-1
\]

We can translate this result as follows: let $\chi=(a_1,\dots, a_n)$ denote an oper acting nontrivially on $V_m$. Then, we must have  
\[
-\nu(a_\ell)\leq  m+\ell-1
\]
where  $-\nu(a_\ell)$ denotes the order of the pole of $a_\ell \in \cK=\bCpp$. 

\sssec{Using Komatsu-Malgrange formula to estimate the irregularity} 
The previous inequality implies that
\[
\ell-\nu(a_{n-\ell}) \leq \ell + (m + n-\ell -1)\leq m+n-1.
\]
According to the Komatsu-Malgrange formula \eqref{eq:Irr}, this implies that $\Irr(\chi)\leq m-1$. The theorem is, therefore, established. 
\qed

 \begin{bibdiv}
\begin{biblist}

\bib{BD}{webpage}
{
 Author={Beilinson, A.},
 Author={Drinfeld, V.},
  Title={Quantization of {H}itchin's Integrable system and {H}ecke eigensheaves},
 myurl={http://www.math.uchicago.edu/~mitya/langlands/hitchin/BD-hitchin.pdf},
 Year={1997},
}

\bib{BDOper}{webpage}
{
 Author={Beilinson, A.},
 Author={Drinfeld, V.},
  Title={Opers},
 myurl={http://arxiv.org/pdf/math/0501398.pdf},
 Year={2005},
}

\bib{Cass}{article}
{
    AUTHOR = {Casselman, William},
     TITLE = {On some results of {A}tkin and {L}ehner},
   JOURNAL = {Math. Ann.},
  FJOURNAL = {Mathematische Annalen},
    VOLUME = {201},
      YEAR = {1973},
     PAGES = {301--314},
}

 \bib{CK}{article}
{
 Author={Chen, T-H.},
 Author={Kamgarpour, M.},
  Title={Preservation of depth in local geometric Langlands correspondence},
 Journal={arXiv.1404.0598},
 Year={2014},
}

\bib{CM}{article} 
{
    AUTHOR = {Chervov, A. V.}
    Author={Molev, A. I.},
     TITLE = {On higher-order {S}ugawara operators},
   JOURNAL = {Int. Math. Res. Not. IMRN},
  FJOURNAL = {International Mathematics Research Notices. IMRN},
      YEAR = {2009},
    NUMBER = {9},
     PAGES = {1612--1635},
}

\bib{CT}{article}
{
Author={Chervov, A. V.}, 
Author={Talalaev, D.}, 
Title={Quantum spectral curves, quantum integrable systems and the geometric Langlands correspondence},
Journal={arXiv: hep-th/0604128},
Year={2006}, 
}

\bib{Deligne}{book}
{
    AUTHOR = {Deligne, Pierre},
     TITLE = {\'{E}quations diff\'erentielles \`a points singuliers
              r\'eguliers},
    SERIES = {Lecture Notes in Mathematics, Vol. 163},
 PUBLISHER = {Springer-Verlag},
   ADDRESS = {Berlin},
      YEAR = {1970},
}

\bib{DS}{incollection}
{
    AUTHOR = {Drinfeld, V. G.},
    Author={Sokolov, V. V.},
     TITLE = {Lie algebras and equations of {K}orteweg-de {V}ries type},
 BOOKTITLE = {Current problems in mathematics, {V}ol. 24},
    SERIES = {Itogi Nauki i Tekhniki},
     PAGES = {81--180},
 PUBLISHER = {Akad. Nauk SSSR Vsesoyuz. Inst. Nauchn. i Tekhn. Inform.},
   ADDRESS = {Moscow},
      YEAR = {1984},
}

\bib{FF}{incollection}
{
    AUTHOR = {Feigin, B.},
    Author={Frenkel, E.},
     TITLE = {Affine {K}ac-{M}oody algebras at the critical level and
              {G}elfand-{D}iki\u\i\ algebras},
 BOOKTITLE = {Infinite analysis, {P}art {A}, {B} ({K}yoto, 1991)},
    SERIES = {Adv. Ser. Math. Phys.},
    VOLUME = {16},
     PAGES = {197--215},
     Year={1991},
}

 \bib{FrenkelLecture}{webpage}
{
    AUTHOR = {Frenkel, E.},
     TITLE = {Affine Kac-Moody algebras, integrable systems, and their deformations},
myurl={https://math.berkeley.edu/~frenkel/lecture.pdf},
      YEAR = {2002},
 }

\bib{FrenkelBook}{book}
{
    AUTHOR = {Frenkel, E.},
     TITLE = {Langlands correspondence for loop groups},
    SERIES = {Cambridge Studies in Advanced Mathematics},
    VOLUME = {103},
 PUBLISHER = {Cambridge University Press},
   ADDRESS = {Cambridge},
      YEAR = {2007},
     PAGES = {xvi+379},
 }

 \bib{FBz}{book}
 {
    AUTHOR = {Frenkel, E.},
    Author={Ben-Zvi, D.},
     TITLE = {Vertex algebras and algebraic curves},
    SERIES = {Mathematical Surveys and Monographs},
    VOLUME = {88},
   EDITION = {Second},
 PUBLISHER = {American Mathematical Society, Providence, RI},
      YEAR = {2004},
     PAGES = {xiv+400},
}

\bib{FG}{incollection}
{
    AUTHOR = {Frenkel, E.},
    Author={Gaitsgory, D.},
     TITLE = {Local geometric {L}anglands correspondence and affine
              {K}ac-{M}oody algebras},
 BOOKTITLE = {Algebraic geometry and number theory},
    SERIES = {Progr. Math.},
    VOLUME = {253},
     PAGES = {69--260},
 PUBLISHER = {Birkh\"auser Boston},
   ADDRESS = {Boston, MA},
      YEAR = {2006},
}

\bib{FZ}{article}
{
    AUTHOR = {Frenkel, E.},
    Author={Zhu, X.},
     TITLE = {Any flat bundle on a punctured disc has an oper structure},
   JOURNAL = {Math. Res. Lett.},
  FJOURNAL = {Mathematical Research Letters},
    VOLUME = {17},
      YEAR = {2010},
    NUMBER = {1},
     PAGES = {27--37},
}

\bib{GaitsgorySeminar}{webpage}
{
 Author={Gaitsgory, D.},
 Author={Barlev, J.},
  Title={Notes on Opers -- seminar on quantization of the {H}itchin system},
 myurl={http://www.math.harvard.edu/~gaitsgde/grad_2009/SeminarNotes/March2-9(Opers).pdf},
 Year={2009},
}

\bib{GR}{article}
{
    AUTHOR = {Gross, B.},
    Author={Reeder, M.},
     TITLE = {Arithmetic invariants of discrete {L}anglands parameters},
   JOURNAL = {Duke Math. J.},
  FJOURNAL = {Duke Mathematical Journal},
    VOLUME = {154},
      YEAR = {2010},
    NUMBER = {3},
     PAGES = {431--508},
   }

\bib{JSPS}{article}
  {
    AUTHOR = {Jacquet, H.},
    Author={ Piatetski-Shapiro, I.}, 
    Author={Shalika, J.},
     TITLE = {Conducteur des repr\'esentations du groupe lin\'eaire},
   JOURNAL = {Math. Ann.},
  FJOURNAL = {Mathematische Annalen},
    VOLUME = {256},
      YEAR = {1981},
    NUMBER = {2},
     PAGES = {199--214},
}

\bib{Kac}{book}
{
    AUTHOR = {Kac, Victor G.},
     TITLE = {Infinite-dimensional {L}ie algebras},
   EDITION = {Third},
 PUBLISHER = {Cambridge University Press, Cambridge},
      YEAR = {1990},
 }

\bib{Masoud}{article}
{
Author={Kamgarpour, M.},
Title={Compatibility of Feigin-Frenkel Isomorphism and Harish-Chandra Isomorphism for jet algebras},
Journal={Transactions of AMS},
Year={2014},
}

\bib{MasoudTravis}{article}
{
Author={Kamgarpour, M.},
Author={Schedler, T.},
Title={Geometrization of principal series representations of reductive groups},
Journal={Annales de l\'institut {F}ourier},
Year={2015},
}

\bib{Katz}{article}{
    AUTHOR = {Katz, Nicholas M.},
     TITLE = {On the calculation of some differential {G}alois groups},
   JOURNAL = {Invent. Math.},
  FJOURNAL = {Inventionaes Mathematicae},
    VOLUME = {87},
      YEAR = {1987},
    NUMBER = {1},
     PAGES = {13--61},
}

\bib{Kedlaya}{book}
{
    AUTHOR = {Kedlaya, Kiran S.},
     TITLE = {{$p$}-adic differential equations},
    SERIES = {Cambridge Studies in Advanced Mathematics},
    VOLUME = {125},
 PUBLISHER = {Cambridge University Press, Cambridge},
      YEAR = {2010},
     PAGES = {xviii+380},
   }
   
   \bib{Komatsu}{article}
   {
      AUTHOR = {Komatsu, H.},
     TITLE = {On the index of ordinary differential operators},
   JOURNAL = {J. Fac. Sci. Univ. Tokyo Sect. IA Math.},
  FJOURNAL = {Journal of the Faculty of Science. University of Tokyo.
              Section IA. Mathematics},
    VOLUME = {18},
      YEAR = {1971},
     PAGES = {379--398},
 }

 \bib{Luu}{article} 
 {
 Author={Luu, M.},
 Title={Local Langlands Duality and a Duality of Conformal Field Theories},
 Journal={arXiv:1506.00663}, 
 Year={2015}
 }

\bib{Malgrange}{article}
{
    AUTHOR = {Malgrange, Bernard},
     TITLE = {Sur les points singuliers des \'equations diff\'erentielles},
   JOURNAL = {Enseignement Math. (2)},
  FJOURNAL = {L'Enseignement Math\'ematique. Revue Internationale. IIe
              S\'erie},
    VOLUME = {20},
      YEAR = {1974},
     PAGES = {147--176},
}

\bib{Tsai}{webpage}
{
    AUTHOR = {Tsai, Pei-Yu},
     TITLE = {Thesis-Harvard University: On new forms for split special odd orthogonal groups},
myurl={http://math.harvard.edu/~pytsai/FixedVectors.pdf},
      YEAR = {2013},
 }

\bib{Wedhorn}{incollection} 
{
    AUTHOR = {Wedhorn, Torsten},
     TITLE = {The local {L}anglands correspondence for {${\rm GL}(n)$} over
              {$p$}-adic fields},
 BOOKTITLE = {School on {A}utomorphic {F}orms on {${\rm GL}(n)$}},
    SERIES = {ICTP Lect. Notes},
    VOLUME = {21},
     PAGES = {237--320},
 PUBLISHER = {Abdus Salam Int. Cent. Theoret. Phys., Trieste},
      YEAR = {2008},
 }

\end{biblist} 
\end{bibdiv}

  \end{document}